\newtheorem{theorem}{Theorem}[section]
\newtheorem{lemma}{Lemma}[section]
\newtheorem{rk}{Remark}[section]
\newtheorem{de}{Definition}[section]
\newtheorem{eg}{Example}[section]
\newcommand{\C}{\mathcal{C}}
\title{Primitive Normal Values of Rational Functions over Finite Fields}
\author{Avnish K. Sharma}
\address{Department of Mathematics, University of Delhi, New Delhi-110007, India}
\email{avkush94@gmail.com}
\author{Mamta Rani}
\address{Department of Mathematics, University of Delhi, New Delhi-110007, India}
\email{mamta11singla@gmail.com}
\author{Sharwan K. Tiwari$^*$}
\address{Scientific Analysis Group, Defence Research $\&$ Development Organization, Metcalfe House, Delhi-110054, India}
\email{shrawant@gmail.com}
\thanks{$^*$ Corresponding author}
\keywords{Finite fields; Primitive elements; Normal elements; Additive and Multiplicative characters.}
\subjclass{}
\begin{document}
	\maketitle
	\sloppy
	

	    \begin{abstract}
	     In this paper, we consider rational functions $f$ with some minor restrictions over the finite field $\mathbb{F}_{q^n},$ where $q=p^k$ for some prime $p$ and positive integer $k$. We establish a sufficient condition for the existence of a pair $(\alpha,f(\alpha))$ of primitive normal elements in $\mathbb{F}_{q^n}$ over $\mathbb{F}_{q}.$ Moreover, for $q=2^k$ and rational functions $f$ with quadratic numerators and denominators, we explicitly find that there are at most $55$ finite fields $\mathbb{F}_{q^n}$ in which such a pair $(\alpha,f(\alpha))$ of primitive normal elements may not exist.
         \end{abstract}

	\section{Introduction}\label{Sec1}
		Let $q=p^k$ for some prime $p$ and a positive integer $k$, and $\mathbb{F}_{q^n}$ be the extension of finite field $\mathbb{F}_{q}$ of degree $n.$ An element $\alpha\in \mathbb{F}_{q^n}$ is called a {\it primitive element}, if it is a generator of the multiplicative cyclic group $\mathbb{F}_{q^n}^*$, and an element $\beta\in \mathbb{F}_{q^n}$ is called a {\it normal element} over $\mathbb{F}_{q}$, if $\{\beta, \beta^q,\beta^{q^2},\ldots,\beta^{q^{n-1}}\}$ forms a basis (called {\it normal basis}) of $\mathbb{F}_{q^n}$ over $\mathbb{F}_q$. An element of a finite field $\mathbb{F}_{q^n}$ is called a {\it primitive normal element}, if it is both primitive and normal over $\mathbb{F}_{q}$. In the computation theory, normal elements play an important role for the efficient  implementation of finite field arithmetic such as multiplication and exponentiation of field elements \cite{mullin,Omura,Onyszchuk}.
		Various applications of primitive and normal elements can be seen in cryptography, coding theory and signal processing \cite{diffie,blum,Agnew}.

		Carlitz \cite{CarlPrim,CarlSome} discussed the problem of the existence of primitive normal elements over finite fields and showed that such elements exist in sufficiently large fields $\mathbb{F}_{q^n}$ over $\mathbb{F}_q.$ Also, the existence of such elements over prime fields was proved by  Davenport \cite{Daven}. Later, Lenstra and Schoof \cite{Lenstra} completely settled the problem and gave the following result.
		\begin{theorem}
			For every prime power $q>1$ and every positive integer $n$ there exists a primitive normal basis of $\mathbb{F}_{q^n}$ over $\mathbb{F}_q.$
		\end{theorem}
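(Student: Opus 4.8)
The plan is to count the primitive normal elements of $\mathbb{F}_{q^n}$ by means of a pair of character sums---one multiplicative, one additive---isolate a positive main term, bound the remainder by Gauss‑sum estimates, and then clear the (a priori infinite) list of pairs $(q,n)$ that survive this bound by a sieve combined with direct verification.

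First I would restate both defining properties as ``freeness'' conditions. An element $\alpha\in\mathbb{F}_{q^n}^{*}$ is primitive precisely when it is $(q^{n}-1)$‑free, i.e.\ $\alpha$ is not a $d$‑th power in $\mathbb{F}_{q^n}^{*}$ for any prime $d\mid q^{n}-1$; and $\alpha$ is normal over $\mathbb{F}_q$ precisely when its $\mathbb{F}_q[x]$‑order equals $x^{n}-1$, where $x$ acts as the Frobenius $\sigma:\beta\mapsto\beta^{q}$ and $\mathbb{F}_q[x]/(x^{n}-1)\cong\mathbb{F}_q[\sigma]$. Each condition has a standard characteristic function. For $m\mid q^{n}-1$,
\[
\varrho_{m}(\alpha)=\frac{\phi(m)}{m}\sum_{d\mid m}\frac{\mu(d)}{\phi(d)}\sum_{\mathrm{ord}\,\chi=d}\chi(\alpha)
\]
detects the $m$‑free elements; and for a monic $g\mid x^{n}-1$,
\[
\kappa_{g}(\alpha)=\frac{\Phi_q(g)}{q^{\deg g}}\sum_{h\mid g}\frac{\mu(h)}{\Phi_q(h)}\sum_{\mathrm{Ord}\,\psi_{\beta}=h}\psi_{\beta}(\alpha)
\]
detects the elements whose $\mathbb{F}_q[x]$‑order is a multiple of $g$; here $\psi_{\beta}(\alpha)=\psi(\beta\alpha)$ runs over the additive characters of $\mathbb{F}_{q^n}$, and $\Phi_q,\mu$ denote the $\mathbb{F}_q[x]$‑analogues of Euler's totient and the Möbius function. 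The quantity to study is
\[
\mathcal{N}=\sum_{\alpha\in\mathbb{F}_{q^n}}\varrho_{q^{n}-1}(\alpha)\,\kappa_{x^{n}-1}(\alpha).
\]

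Next I would expand $\mathcal{N}$ into a double sum over $d\mid q^{n}-1$ and $h\mid x^{n}-1$ and interchange summations so that the inner sum is a Gauss sum $\sum_{\alpha}\chi(\alpha)\psi_{\beta}(\alpha)$. The pair $d=1,\ h=1$ yields the main term
\[
M=\frac{\phi(q^{n}-1)}{q^{n}-1}\,\Phi_q(x^{n}-1),
\]
each summand with $d>1$ and $h>1$ contributes a Gauss sum of absolute value $q^{n/2}$, and the summands with exactly one of $d,h$ trivial contribute nothing beyond an $O(1)$ correction apiece. Pairing the weights $|\mu(d)|/\phi(d)$, $|\mu(h)|/\Phi_q(h)$ against the numbers $\phi(d)$, $\Phi_q(h)$ of characters of the relevant order, one gets $|\mathcal{N}-M|\le \tfrac{M}{q^{n}}\,q^{n/2}\,W(q^{n}-1)\,W(x^{n}-1)$ up to the $O(1)$ corrections, where $W(q^{n}-1)$ is the number of squarefree divisors of $q^{n}-1$ and $W(x^{n}-1)$ the number of radical divisors of $x^{n}-1$. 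Hence $\mathcal{N}>0$ as soon as
\[
W(q^{n}-1)\,W(x^{n}-1)<q^{n/2},
\]
which already settles all but finitely many $n$ for each fixed $q$ and all sufficiently large $q$.

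The main obstacle is that this inequality can fail for infinitely many $(q,n)$, since $q^{n}-1$ may have arbitrarily many prime divisors (e.g.\ $q=2$). The remedy is not to load all primes dividing $q^{n}-1$ into $\varrho$ at once: one splits off a divisor $m_{0}\mid q^{n}-1$ built from the ``small'' primes, tests primitivity through $\varrho_{m_{0}}$ together with a sieve over the remaining primes $p_{1},\dots,p_{s}$, and does the analogous thing for $x^{n}-1$. The classical sieving inequality then replaces the factor $W$ by a quantity of size $\approx s+1$ at the cost of a denominator $\delta=1-\sum_{i}p_{i}^{-1}$, so that (whenever $\delta>0$) the condition for $\mathcal{N}>0$ becomes one of the shape $2\big(s+\tfrac1\delta\big)\,W(\cdots)<\delta\,q^{n/2}$, which holds far more often. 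Optimising $m_{0}$ and the choice of sieved primes shrinks the set of undecided pairs $(q,n)$ to an explicit finite list---concentrated, as one expects, at $q$ a small power of $2$ or $3$ and $n$ small, where the Gauss‑sum cancellation is weakest---and those remaining cases are dispatched individually, e.g.\ by exhibiting a primitive normal element directly or by a short machine computation. The two points I expect to cost the most effort are (i) the bookkeeping of the sieve constants needed to make the finite list genuinely small, and (ii) the case $p\mid n$, where $x^{n}-1=(x^{n_{0}}-1)^{p^{t}}$ is not squarefree, so that $\Phi_q$, the radical, and the additive characteristic function $\kappa$ must all be taken with the correct $p$‑part.
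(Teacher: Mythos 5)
This statement is not proved in the paper at all: it is the Primitive Normal Basis Theorem, quoted from Lenstra and Schoof \cite{Lenstra}, so there is no internal proof to compare against. Your outline is nonetheless the correct and standard route to it --- characteristic functions $\rho_e$ and $\kappa_g$ for $e$-free and $g$-free elements, expansion into complete character sums of modulus $q^{n/2}$, the resulting sufficient condition $W(q^n-1)\,W(x^n-1)<q^{n/2}$, and a sieve plus explicit verification of the survivors --- and it is precisely the machinery this paper deploys for its own results (the characteristic functions of Section \ref{Sec2}, Theorem \ref{T4.1}, the sieving inequality of Lemma \ref{l4.1} and Theorem \ref{T4.2}, and the case tables of Section \ref{Sec5}). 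Your treatment of the main term, of the mixed terms with exactly one of $d,h$ trivial, and your flagged concern about $p\mid n$ (resolved by the standard fact that $g$-freeness depends only on the radical of $g$, so normality is $(x^{n_0}-1)$-freeness for $n=n_0p^t$ with $\gcd(n_0,p)=1$) are all sound.

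The one substantive reservation is that what you have written is a plan, not a proof. For this theorem the claim is that the exceptional set is \emph{empty}, and essentially all of the difficulty in Lenstra--Schoof (and in Cohen--Huczynska's later computer-free treatment) resides in the step you defer with ``dispatched individually\ldots by a short machine computation'': after the asymptotic bound and the sieve one is still left with a sizeable explicit list of pairs $(q,n)$, each of which must be killed either by constructing a primitive normal element or by a finer argument, and small $q$ with $p\mid n$ or $n\mid q-1$ (where $W(x^n-1)=2^n$) are genuinely delicate. Until that finite verification is carried out, the argument establishes only that the exceptional set is finite and explicitly boundable, not that it is empty; so as a proof of the stated theorem the proposal is incomplete, though its architecture is exactly right and consistent with how the paper attacks the analogous questions for pairs $(\alpha,f(\alpha))$.
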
 
		  We know that the primitivity of an element is preserved under the inverse in finite fields, but what can we say about the normality of an element? Tian and Qi \cite{TianQi} answered this question partially by proving that for $n\geq32,$ $\mathbb{F}_{q^n}$ contains an element $\alpha$ such that both $\alpha$ and $\alpha^{-1}$ are normal elements over $\mathbb{F}_{q}.$ Later, Cohen and Huczynska \cite{CoHuc} completely resolved this question by proving the following result.
		  
		  \begin{theorem}\cite[Theorem 1.4]{CoHuc}(Strong Primitive Normal Basis Theorem) There exists a primitive normal element $\alpha \in \mathbb{F}_{q^n}$; $n\geq 2$, such that $\alpha^{-1}$ is also a primitive normal over $\mathbb{F}_{q}$ unless $(q,n)$ is one of the pair $(2,3), (2,4),(3,4),(4,3),(5,4).$ 
		  \end{theorem}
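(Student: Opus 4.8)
The plan is to establish the theorem by the now-standard character-sum method for simultaneously primitive and free (normal) elements, combined with Cohen's prime sieve, leaving a finite list of pairs to be checked directly. First I would reduce the problem: since primitivity is preserved under $\alpha\mapsto\alpha^{-1}$, it suffices to produce $\alpha\in\mathbb{F}_{q^n}$ that is at once primitive, normal over $\mathbb{F}_q$, and such that $\alpha^{-1}$ is normal over $\mathbb{F}_q$. Recall that $\alpha$ is primitive exactly when it is $(q^n-1)$-free in $\mathbb{F}_{q^n}^*$, and $\alpha$ is normal over $\mathbb{F}_q$ exactly when it is $(x^n-1)$-free as an element of the $\mathbb{F}_q[x]$-module $\mathbb{F}_{q^n}$, where $x$ acts as the $q$-power Frobenius. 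I would then introduce the two characteristic functions: for $e\mid q^n-1$,
\[
\rho_e(\alpha)=\theta(e)\sum_{d\mid e}\frac{\mu(d)}{\varphi(d)}\sum_{\chi_d}\chi_d(\alpha),
\]
a weighted sum over multiplicative characters of order $d$ with $\theta(e)=\varphi(e)/e$; and for a monic $g\mid x^n-1$ in $\mathbb{F}_q[x]$,
\[
\kappa_g(\alpha)=\Theta(g)\sum_{h\mid g}\frac{\mu(h)}{\Phi(h)}\sum_{\psi_h}\psi_h(\alpha),
\]
a weighted sum over additive characters of $\mathbb{F}_q$-order $h$, with $\mu,\Phi$ the polynomial Möbius and Euler functions and $\Theta(g)=\Phi(g)/q^{\deg g}$.

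Next I would write the target count as a single sum. Let $N$ be the number of $\alpha\in\mathbb{F}_{q^n}^*$ that are primitive, normal over $\mathbb{F}_q$, and have $\alpha^{-1}$ normal over $\mathbb{F}_q$; then
\[
N=\sum_{\alpha\in\mathbb{F}_{q^n}^*}\rho_{q^n-1}(\alpha)\,\kappa_{x^n-1}(\alpha)\,\kappa_{x^n-1}(\alpha^{-1}).
\]
Expanding the three bracketed sums and isolating the term in which every character is trivial gives the main term $\theta(q^n-1)\,\Theta(x^n-1)^2\,(q^n-1)$, which is positive and of order $q^n$. Every other term is, after expressing additive characters of $\mathbb{F}_{q^n}$ through the trace form, a complete character sum
\[
\Big|\sum_{\alpha\in\mathbb{F}_{q^n}^*}\chi(\alpha)\,\psi\big(\mathrm{Tr}(a\alpha+b\alpha^{-1})\big)\Big|,
\]
with $\chi$ multiplicative and $\psi$ the canonical additive character of $\mathbb{F}_q$; by Weil's bound for character sums on the projective line each such sum is at most $2q^{n/2}$ (with the degenerate Kloosterman-type and pure-multiplicative sub-cases no larger). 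Summing the error over all choices of $(d,\chi_d,h_1,\psi_{h_1},h_2,\psi_{h_2})$ yields
\[
\big|N-\theta(q^n-1)\,\Theta(x^n-1)^2\,(q^n-1)\big|\le 2\,q^{n/2}\,W(q^n-1)\,W(x^n-1)^2,
\]
where $W(q^n-1)$ is the number of squarefree divisors of $q^n-1$ and $W(x^n-1)=2^{\omega}$ with $\omega$ the number of distinct irreducible factors of $x^n-1$ over $\mathbb{F}_q$. Hence $N>0$ whenever $q^{n/2}>2\,W(q^n-1)\,W(x^n-1)^2$.

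This crude inequality fails for many $(q,n)$, so I would apply Cohen's sieving inequality: choose a core divisor $e_0\mid q^n-1$ (built from the small prime factors) and a core polynomial $g_0\mid x^n-1$, let $p_1,\dots,p_s$ be the remaining prime factors of $q^n-1$ and $\ell_1,\dots,\ell_t$ the remaining irreducible factors of $x^n-1$, and use an inequality of the shape
\[
N\ \ge\ \sum_{i=1}^{s}N(e_0p_i,g_0)+\sum_{j=1}^{t}\big(N(e_0,g_0\ell_j)+N(e_0,g_0\ell_j)'\big)-\big(s+2t-1\big)N(e_0,g_0),
\]
where the doubling of the polynomial terms reflects the two normality conditions (on $\alpha$ and on $\alpha^{-1}$). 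Combined with the Paragraph-2 estimate for $N(e_0,g_0)$, this reduces the requirement to a condition of the form $q^{n/2}>\big(\tfrac{s+2t-1}{\delta}+2\big)W(e_0)W(g_0)^2$ with $\delta=1-\sum_i p_i^{-1}-\sum_j q^{-\deg\ell_j}>0$; balancing $\delta$ against the character count by a suitable, often $n$-dependent choice of $(e_0,g_0)$ makes the condition hold for all $n$ beyond a small bound and, for each remaining small $n$, for all $q$ beyond a bound. The finitely many surviving pairs $(q,n)$ are then disposed of by direct machine computation: for each, either a witness $\alpha$ is exhibited (search primitive elements and test that the $\mathbb{F}_q$-order of $\alpha$ and of $\alpha^{-1}$ both equal $x^n-1$, via the associated linearized polynomials), or its nonexistence is confirmed, which pins down precisely the exceptional list $(2,3),(2,4),(3,4),(4,3),(5,4)$.

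The main obstacle is twofold. First, the mixed multiplicative–additive sum $\sum_\alpha\chi(\alpha)\psi(\mathrm{Tr}(a\alpha+b\alpha^{-1}))$ must be bounded uniformly with an explicit constant, and the degenerate sub-cases (pure Kloosterman sums when $\chi$ is trivial but $b\neq0$, pure Gauss-type sums in other specializations) must be folded in cleanly; this is where the genuine analytic input sits. Second, and in practice the heavier labor, is making the sieve quantitatively sharp enough that the residual finite set is genuinely small — this needs good explicit bounds on $W(q^n-1)$ (hence on the number of prime divisors of $q^n-1$), careful per-$n$ selection of the cores, and separate handling of the small $n$ for which $x^n-1$ is inseparable or splits with atypically few or many irreducible factors. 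No single deep step, but a long chain of careful estimates and case bookkeeping.
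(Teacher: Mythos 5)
The paper does not prove this statement; it is quoted verbatim from Cohen and Huczynska \cite[Theorem 1.4]{CoHuc}, and your outline --- characteristic functions for $e$-free and $g$-free elements, a Weil-type bound of $2q^{n/2}$ for the mixed sums $\sum_{\alpha}\chi(\alpha)\psi(\mathrm{Tr}(a\alpha+b\alpha^{-1}))$, a sieving inequality, and machine verification of the residual pairs --- is precisely the method of that source and the same machinery this paper itself deploys in Sections \ref{Sec2} and \ref{Sec4} for Theorem \ref{T4.1}. Be aware, though, that what you have written is a correct strategy rather than a complete proof: the explicit per-$(q,n)$ sieve choices, the uniform treatment of the degenerate Kloosterman and Gauss sub-cases, and the computations that show the five listed pairs are genuine exceptions constitute the bulk of the actual work and are only gestured at in your sketch.
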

	      We define a pair $(\alpha,\beta)$ to be a {\it primitive pair} in $\mathbb{F}_{q^n}$, if both $\alpha$ and $\beta$ are primitive elements in $\mathbb{F}_{q^n}.$ Additionally, if both $\alpha$ and $\beta$ are normal elements over $\mathbb{F}_{q}$, then the pair $(\alpha,\beta)$ is  called a {\it primitive normal pair} in $\mathbb{F}_{q^n}$ over $\mathbb{F}_{q}.$ In this terminology, strong primitive normal basis theorem actually showed the existence of a primitive normal pair $(\alpha,\ \alpha^{-1}).$ It is natural to ask the existence of a primitive (normal) pair $(\alpha,f(\alpha))$ in $\mathbb{F}_{q^n}$ for any relation $f$ over $\mathbb{F}_{q^n}.$ Cohen \cite{CohenConsecutive} showed the existence of primitive pairs $(\alpha,\alpha+1)$ in  $\mathbb{F}_{q}$ for $q\geq3, \ q\not\equiv7(\text{mod} \ 12)$ and $q\not\equiv1(\text{mod} \ 60).$ Cohen \cite{SDEven} also proved the existence of primitive pairs $(\alpha,\alpha+\alpha^{-1})$ in $\mathbb{F}_{q^n}$ for $q=2^k\geq8$ and $n\geq1$, which was already proved by Wang et al. \cite{Wang} for $q=2^k\geq16$ and for odd $n\geq13.$ Recently, Liao et al. \cite{Liao} generalized this result to the case where $q$ is any prime power. Kapetanakis \cite{KapeNBT,Kape} contributed in this direction by proving the existence of primitive normal pairs $\big(\alpha,\frac{a\alpha+b}{c\alpha+d}\big)$ in $\mathbb{F}_{q^n}$ over $\mathbb{F}_{q}$ for $q\geq 23$ and $n\geq17.$ Anju and R.K. Sharma \cite{ARKS} proved the existence of primitive pairs $(\alpha,\alpha^2+\alpha+1)$ in $\mathbb{F}_{q^n}$ and further showed the existence of  primitive pairs $(\alpha,\alpha^2+\alpha+1)$ where $\alpha$ is a normal element in $\mathbb{F}_{q^n}$ over $\mathbb{F}_{q}.$ Booker et al. \cite{Booker}  generalized this result for arbitrary quadratic polynomials $f$ in $\mathbb{F}_{q}[x]$ with non-zero discriminant and gave a small list of genuine exceptions.
	      	
		  In \cite{AnjuPN,Awasthi}, R. K. Sharma et al. discussed the existence of primitive pairs $\Big(\alpha,f(\alpha)=\frac{a\alpha^2+b\alpha+c}{d\alpha^2+e\alpha+f}\Big)$ over finite fields $\mathbb{F}_{2^k}$ and proved the following result.
		  \begin{theorem}\label{T1.3}
		  	Let $f=\dfrac{ax^2+bx+c}{dx^2+ex+f} \in \mathbb{F}_{2^k}(x),$ where $a,d$ are non-zero and  $(dx^2+ex+f)\nmid (ax^2+bx+c)$. Then there always exists a primitive element $\alpha \in \mathbb{F}_{2^k}$ such that $f(\alpha)$ is also a primitive element in $\mathbb{F}_{2^k}$ for all $k$ except $k \in \{1,2,4,6,8,9,10,12\}.$
		  \end{theorem}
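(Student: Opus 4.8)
The plan is to prove Theorem~\ref{T1.3} by the character-sum ($e$-free) method that underlies all the results quoted above. Since here the ambient field $\mathbb{F}_{2^{k}}$ coincides with the ground field, no normality condition is present and only primitivity of the pair $(\alpha,f(\alpha))$ is at issue. Recall that for a divisor $e\mid q-1$ an element $\gamma\in\mathbb{F}_q^{*}$ is \emph{$e$-free} if $\gamma=\beta^{d}$ with $d\mid e$ forces $d=1$, and that $\gamma$ is primitive iff it is $(q-1)$-free; the indicator of $e$-freeness is
\[
\mathbf{1}_{e}(\gamma)=\theta(e)\sum_{d\mid e}\frac{\mu(d)}{\phi(d)}\sum_{\operatorname{ord}\chi=d}\chi(\gamma),\qquad\theta(e)=\tfrac{\phi(e)}{e}.
\]
Writing $f=g/h$ with $g=ax^{2}+bx+c$ and $h=dx^{2}+ex+f$ (both nonconstant, $h\nmid g$), I would set
\[
N_{f}=\sum_{\alpha}\mathbf{1}_{q-1}(\alpha)\,\mathbf{1}_{q-1}(f(\alpha)),
\]
the sum running over $\alpha\in\mathbb{F}_q$ with $\alpha\neq0$, $h(\alpha)\neq0$ and $g(\alpha)\neq0$. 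The theorem then amounts to showing $N_{f}>0$ for all $k\notin\{1,2,4,6,8,9,10,12\}$.

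Substituting the formula for both indicators and interchanging summations gives
\[
N_{f}=\theta(q-1)^{2}\sum_{d_{1}\mid q-1}\ \sum_{d_{2}\mid q-1}\frac{\mu(d_{1})\mu(d_{2})}{\phi(d_{1})\phi(d_{2})}\sum_{\operatorname{ord}\chi_{1}=d_{1}}\ \sum_{\operatorname{ord}\chi_{2}=d_{2}}S(\chi_{1},\chi_{2}),\qquad S(\chi_{1},\chi_{2})=\sum_{\alpha}\chi_{1}(\alpha)\chi_{2}(f(\alpha)).
\]
The term $d_{1}=d_{2}=1$ contributes the main term $\theta(q-1)^{2}(q-O(1))$, the $O(1)$ accounting for the at most four excluded points. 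For every other term I would write $\chi_{2}(f(\alpha))=\chi_{2}(g(\alpha))\overline{\chi_{2}}(h(\alpha))$, exhibiting $S(\chi_{1},\chi_{2})$ as a sum of a product of multiplicative characters evaluated at the polynomials $x,g,h$ of degrees $1,\le2,\le2$, and invoke the Weil bound for such sums: as long as the associated character $\alpha\mapsto\chi_{1}(\alpha)\chi_{2}(f(\alpha))$ of $\mathbb{F}_q(x)$ is nontrivial, $|S(\chi_{1},\chi_{2})|\le c_{f}\sqrt q$ with $c_{f}\le4$ an explicit constant read off from the zeros and poles of $xf(x)$ (and it helps that $q-1$ is odd, so no quadratic characters occur). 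Since $\sum_{d\mid q-1,\ \mu(d)\neq0}\frac{|\mu(d)|}{\phi(d)}\#\{\chi:\operatorname{ord}\chi=d\}=W(q-1):=2^{\omega(q-1)}$, collecting estimates yields
\[
N_{f}\ \ge\ \theta(q-1)^{2}\Bigl(q-O(1)-c_{f}\sqrt q\,\bigl(W(q-1)^{2}-1\bigr)\Bigr),
\]
so $N_{f}>0$ whenever $\sqrt q>4\,W(q-1)^{2}$, up to the negligible lower-order terms.

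Because $W(2^{k}-1)=2^{\omega(2^{k}-1)}$ grows only like $2^{O(k/\log k)}$, the inequality $\sqrt q>4W(q-1)^{2}$ holds for every $q=2^{k}$ beyond an effectively computable bound $Q_{0}$; feeding in explicit upper bounds for $\omega(m)$ and $W(m)$ gives a concrete, modest $Q_{0}$. For the finitely many $k$ with $2^{k}\le Q_{0}$ I would run the Cohen--Huczynska prime sieve: replacing $(q-1)$-freeness by $e$-freeness for a judiciously chosen $e\mid q-1$ supported on few primes and paying the sieving penalty $\delta=1-\sum_{\ell\mid (q-1)/e}\ell^{-1}$, the sufficient condition weakens to roughly $q>\bigl(\tfrac{2(s-1)}{\delta}+2\bigr)^{2}c_{f}^{2}W(e)^{2}$ (with $s$ the number of sieved primes), which eliminates almost all remaining $k$. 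The handful that still survive are settled directly by computer: one either exhibits an explicit primitive pair $(\alpha,f(\alpha))$, or, for $k\in\{1,2,4,6,8,9,10,12\}$, verifies that for a suitable $f$ no such $\alpha$ exists, confirming the exceptional list.

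The step I expect to be the real obstacle is the nontriviality analysis of the hybrid character $\alpha\mapsto\chi_{1}(\alpha)\chi_{2}(f(\alpha))$: it must be shown nontrivial for every pair $(\chi_{1},\chi_{2})\neq(\varepsilon,\varepsilon)$ before the Weil bound applies, and this forces a case split governed by the factorization type of $f$ --- whether $g$ and $h$ are irreducible, split with distinct roots, perfect squares, share a root, or are divisible by $x$ --- which is precisely where the hypotheses $a,d\neq0$ and $h\nmid g$ are indispensable (e.g.\ to rule out $f$ constant, or $xf(x)$ being a perfect power matching some character order). The second genuinely laborious step is the computational verification over the borderline fields, where the number of divisors of $2^{k}-1$ and the size of the search space make a naive search costly and some care with the sieve is needed.
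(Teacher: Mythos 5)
Your outline is the standard character-sum machinery (indicator functions for $e$-freeness, the Weil/Fu--Wan bound with constant $c_f\le 4$, the Cohen--Huczynska sieve, then finite computation), and it is indeed essentially the method used in the source from which this theorem is quoted. But there is a decisive problem: the statement you are trying to prove is false as written, and the present paper does not prove it --- it cites it and then \emph{refutes} it. Section \ref{Sec3} exhibits a genuine exception at $k=3$, which is absent from the listed exceptional set $\{1,2,4,6,8,9,10,12\}$. Concretely, over $\mathbb{F}_{2^3}$ with $\beta$ a root of $x^3+x+1$, the function $f=\frac{\beta x^2+x+\beta}{x^{2}+(\beta+1)x+1}$ satisfies all the hypotheses ($a,d\neq0$, denominator does not divide numerator), yet $f(\alpha)$ is never primitive for a primitive $\alpha$: the numerator vanishes at $\beta^3,\beta^4$, the denominator at $\beta^2,\beta^5$, and $f(\beta)=f(\beta^6)=1$. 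Since $2^3-1=7$ is prime, $\mathbb{F}_{8}^*$ has exactly $6$ primitive elements, and these six conditions exhaust all of them. The obstruction is exactly the counting heuristic your main term ignores: the three ``bad'' events $f_1(\alpha)=0$, $f_2(\alpha)=0$, $f_1(\alpha)=f_2(\alpha)$ can each kill two primitive elements, and $6$ is precisely the number of primitive elements available.

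This does not invalidate your analytic framework --- since $W(7)=2$, your sufficient condition $\sqrt q>4\,W(q-1)^2$ reads $2.83>16$ at $k=3$, so $k=3$ necessarily falls into your ``handful that still survive'' and must be settled by direct search. The gap is in your final sentence, where you assert that the direct search ``confirms the exceptional list'': for $k=3$ it does the opposite. Carried out honestly, your argument would yield the corrected statement with $k=3$ adjoined to the exceptional set, not a proof of the theorem as quoted. (A secondary caution: your nontriviality analysis of $x^{n_1}f^{n_2}$ being a perfect $d$-th power needs the full argument of the kind given in the claim inside Theorem \ref{T4.1} of this paper; the hypotheses $a,d\neq0$ and $f_2\nmid f_1$ alone do not immediately dispose of it, though for $q=2^k$ the oddness of $q-1$ does remove the most troublesome quadratic case, as you note.)
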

	       For the completeness, they provided counter examples for $k=1,2,4$ and conjectured that the above theorem holds for $k=6,8,9,10,12.$ 
	       In section \ref{Sec3} of this paper, we show that for the case $k=3,$ above theorem does not hold by providing a counter example. 
	       	       
{Recently, S.D. Cohen et al. \cite{hari} generalized the above theorem for the rational functions $f=\frac{f_1}{f_2}\in\mathbb{F}_{q}(x)$ that are not of the form $cx^jh^d$ for any $h\in \mathbb{F}_{q}(x),$ $c\in \mathbb{F}_q^*,\ j\in \mathbb{Z},\ d>1$ such that $d\mid q-1.$ In \cite{CarNeu}, Carvalho et al. also generalized the above theorem for a family of rational functions $f=\frac{f_1}{f_2}\in \mathbb{F}_q(x)$ such that there exists at least one monic irreducible polynomial $g\in \mathbb{F}_q[x]$ other than $x$ and a positive integer $n$ with gcd$(n,q-1)=1$, $g^n|f_1f_2$ but $g^{n+1}\nmid f_1f_2.$ Furthermore, Carvalho et al. \cite{carvalho2} established a sufficient condition to prove the existence of a primitive normal element $\alpha \in \mathbb{F}_{q^n}$ over $\mathbb{F}_{q}$  such that $f(\alpha)$ is a primitive element (but not necessarily normal) for the same family. We observed that, any rational function from the latter family cannot be expressed as $cx^jh^d$ for any $h\in \mathbb{F}_{q}(x),$ $c\in \mathbb{F}_q^*,\ j\in \mathbb{Z},\ d>1$ such that $d\mid q-1$, that means each rational function of the latter family belongs to the S.D. Cohen's family, otherwise gcd$(n,q-1)\neq1.$ But the Carvalho's family is a proper subset of S.D. Cohen's family. For example, the rational function $\frac{(x-1)^2}{(x+1)^3}\in \mathbb{F}_{19}(x)$ belongs to the S.D. Cohen's family but not in the Carvalho's family.  

In this paper, we consider primitive elements $\alpha\in \mathbb{F}_{q^n}$ which are also {\it normal} elements over $\mathbb{F}_q$. We provide a sufficient condition for the  existence of a primitive normal element $\alpha \in \mathbb{F}_{q^n}$ such that $f(\alpha)$ is also a primitive normal element in $\mathbb{F}_{q^n}$ for almost all functions $f$ belongs to the S.D. Cohen's family (defined in Section \ref{Sec4}).
	         
		We organize this paper as follows: For the convenience of the reader, we recall some definitions and basic results in Section \ref{Sec2}.  In Section \ref{Sec3}, we show that the case $k=3$ in Theorem \ref{T1.3} is also a genuine exception. In Section \ref{Sec4}, we establish a sufficient condition to show the existence of primitive normal pairs $(\alpha, f(\alpha))$ for every rational function $f$ with some minor exceptions. In Section \ref{Sec5}, an application of this sufficient condition is studied for every rational function $f$ over the fields of even characteristic, where degrees of numerator and denominator of $f$ are $2$. Finally, Section \ref{Sec6} concludes the paper.  

\section{Prerequisites}\label{Sec2}
In this section, we recall some definitions, lemmas and notations that will be used in proving a sufficient condition for the existence of a primitive normal pair and in finding the pairs $(q,n)$ for which such a primitive normal pair may not exists in $\mathbb{F}_{q^n}$ over $\mathbb{F}_q$.
\begin{de}[\bf Character]
	Let $G$ be a finite abelian group and $T$ be the multiplicative group of complex numbers of modulus 1. A homomorphism $\chi : G \to T$ is called a character of $G.$ The set of all characters of $G$ forms a multiplicative group and denoted by $\widehat{G}.$
\end{de}
Finite fields $ \mathbb{F}_{q^n} $ have two group structures, one is additive for $\mathbb{F}_{q^n}$ and another one is multiplicative for  $\mathbb{F}_{q^n}^*$, therefore we have two types of characters, one is an additive character for $\mathbb{F}_{q^n}$  denoted by $\psi$ and the another one is a multiplicative character for $\mathbb{F}_{q^n}^*$ denoted by $\chi.$ The additive character $\psi_0$ defined by $\psi_{0}(\beta)=e^{2\pi i \mathrm{Tr}(\beta)/p}, \ \text{for all} \ \beta \in \mathbb{F}_{q^n},$
where $\mathrm{Tr}$ is the absolute trace function from $\mathbb{F}_{q^n}$ to    $\mathbb{F}_p$, is called the {\it canonical additive character} of $\mathbb{F}_{q^n}$ and every additive character $\psi_\beta$ for $\beta \in \mathbb{F}_{q^n}$ can be expressed in terms of the canonical additive character $\psi_0$ as $\psi_\beta(\gamma)=\psi_{0}(\beta\gamma),\ \text{for all} \ \gamma \in \mathbb{F}_{q^n}.$		
\begin{de}[$\bm{e}$\textbf{-free element}]
	Let $e\mid q^n-1$ and $\beta \in \mathbb{F}_{q^n}.$ If $\beta\neq\gamma^d$ for any  $\gamma  \in \mathbb{F}_{q^n}$ and for any divisor $d$ of $e$ other than 1, then $\beta$ is called an {\it e-free element}.  Clearly, an element $\beta \in \mathbb{F}_{q^n}^*$ is primitive if and only if it is $(q^n-1)$-free.
\end{de}
Let $E$ be the set of $e$-free elements of $\mathbb{F}_{q^n}^*.$ Cohen and Huczynska \cite{CohenConsecutive} defined the characteristic function $\rho_e:\mathbb{F}_{q^n}^*\to \{0,1\}$ for $E$ by
$$\rho_{e}(\beta)= \frac{\phi(e)}{e}\sum_{d|e}\frac{\mu(d)}{\phi(d)}\sum_{\chi_{d}}\chi_{d}(\beta)$$ where $\mu$ is the M\"obius function and $\chi_d$ is the arbitrary multiplicative character of order $d$ and the internal sum runs over all such multiplicative characters.

An action of $\mathbb{F}_q[x]$ on
the additive group $\mathbb{F}_{q^n}$ is defined  by
$$g \circ \beta=\sum_{i=0}^{m}a_i\beta^{q^i}; \ \text{for} \ \beta \in \mathbb{F}_{q^n}\ \text{and} \ g(x)=\sum_{i=0}^{m}a_ix^i \in \mathbb{F}_{q}[x].$$

\begin{de}[$\bm{\mathbb{F}_{q}}$\textbf{-order of an element}]
	Let $\beta \in \mathbb{F}_{q^n}.$ Then the $\mathbb{F}_q$-order of $\beta$ is the least degree monic divisor $g$ of $x^n-1$ such that $g\circ\beta=0$.   
	
\end{de}

\begin{de}[$\bm{g}$\textbf{-free element}]
	Let $g\mid x^n-1$ and $\beta \in \mathbb{F}_{q^n}.$ If $\beta\neq h\circ \gamma$ for any  $\gamma  \in \mathbb{F}_{q^n}$ and for any divisor $h$ of $g$ other than 1, then $\beta$ is called a  $g$-free element.  Clearly, an element $\beta \in \mathbb{F}_{q^n}^*$ is normal if and only if it is $(x^n-1)$-free.
\end{de}

\begin{de}[${\mathbb{F}_{\bm{q}}}$\textbf{-order of an additive character}]
	Let $\psi \in \widehat{\mathbb{F}}_{q^n}.$ Then the $\mathbb{F}_q$-order of $\psi$ is the least degree monic divisor $g$ of $x^n-1$ such that $\psi \circ g$ is a trivial character, where $\psi \circ g(\beta)=\psi(g \circ \beta).$  
\end{de}

Let $F$ denotes the set of $g$-free elements of $\mathbb{F}_{q^n}.$ Then the characteristic function $\kappa_{g}: \mathbb{F}_{q^n}\to \{0,1\}$ for $F$ is given by
$$\kappa_{g}(\beta)= \dfrac{\Phi_q(g)}{q^{\mathrm{deg}(g)}}\sum_{h|g}\dfrac{\mu'(h)}{\Phi_{q}(h)}\sum_{\psi_{h}}\psi_{h}(\beta),$$
where $\Phi_q(g)=\Big|\Big(\frac{\mathbb{F}_q[x]}{<g>}\Big)^*\Big|$, $\mu'$ is the analogue of the M\"obius function, which is defined as $\mu'(g)=(-1)^s$, if $g$ is a product of $s$ distinct monic irreducible polynomials, otherwise 0, and the internal sum runs over the additive characters $\psi_{h}$ of the $\mathbb{F}_{q}$-order $h$.

\begin{lemma}\label{l2.1}
	\cite[Theorem 5.5]{Fu} Let $f \in \mathbb{F}_{q^n}(x)$ be a rational function. Write $f=\prod_{j=1}^{k}f_j^{n_j},$ where $f_j\in \mathbb{F}_{q^n}[x]$ are irreducible polynomials and $n_j$ are non-zero integers. Let $\chi$ be a multiplicative character of order $d$ of $\mathbb{F}_{q^n}^*.$ Suppose that the rational function $f$ is not of the form $h^{d}$ for any $h\in \mathbb{F}(x),$ where $\mathbb{F}$ is the algebraic closure of $\mathbb{F}_{q^n}.$ Then we have  
	$$\Big |\sum_{\substack{\alpha \in \mathbb{F}_{q^n}\\f(\alpha)\neq 0,\infty}}\chi(f(\alpha))\Big|\leq \Big(\sum_{j=1}^{k}\mathrm{deg}(f_j)-1\Big)q^{n/2}.$$
\end{lemma}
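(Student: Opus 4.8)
The plan is to realise the character sum as part of the point count on a superelliptic curve and then invoke the Weil bound, i.e. the Riemann Hypothesis for curves over a finite field. First note that $d$, being the order of a multiplicative character of $\mathbb{F}_{q^n}^{*}$, divides $q^{n}-1$ and so is prime to $p$, and that we may assume $d\ge 2$ (if $d=1$ then $f=f^{1}$ is of the excluded form, so there is nothing to prove). Consider the curve $\mathcal{C}\colon y^{d}=f(x)$ over $\mathbb{F}_{q^n}$ and its smooth projective model. Since $d\mid q^{n}-1$, an element $\gamma\in\mathbb{F}_{q^n}^{*}$ is a $d$-th power precisely when $\chi(\gamma)=1$; hence for each $\alpha$ with $f(\alpha)\ne 0,\infty$ the fibre of $\mathcal{C}$ above $x=\alpha$ has exactly $\sum_{i=0}^{d-1}\chi^{i}(f(\alpha))$ points. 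Summing over all such $\alpha$ and adding the boundedly many points above the zeros and poles of $f$ and above $x=\infty$, I obtain an identity of the shape $\#\mathcal{C}(\mathbb{F}_{q^n})=q^{n}+\sum_{i=1}^{d-1}S_{i}+(\text{bounded error})$, where $S_{i}=\sum_{\alpha:\,f(\alpha)\ne 0,\infty}\chi^{i}(f(\alpha))$ and $S=S_{1}$ is the sum we want.

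To isolate $S$ rather than the combination $\sum_{i}S_{i}$, I would pass to the factorisation of the zeta function of $\mathcal{C}$ into $L$-polynomials $L_{i}(T)$ attached to the characters $\chi^{i}\circ f$, so that $S$ is the negative of the sum of the inverse roots of $L_{1}(T)$ up to a correction of absolute value at most $1$ coming from the place $x=\infty$. This is where the hypothesis that $f$ is not of the form $h^{d}$ with $h\in\mathbb{F}(x)$ is used: it is exactly the condition making $\chi\circ f$ geometrically nontrivial, so that $L_{1}(T)$ is a genuine polynomial all of whose inverse roots have absolute value $q^{n/2}$ by the Riemann Hypothesis. It then remains to bound $\deg L_{1}$. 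By tameness ($p\nmid d$) the conductor of $\chi\circ f$ is the reduced sum of those zeros and poles $P$ of $f$ at which $\mathrm{ord}_{P}(f)$ is not divisible by $d$, and $\deg L_{1}$ equals this conductor degree minus $2$ by Riemann--Hurwitz over $\mathbb{P}^{1}$. There are at most $\sum_{j=1}^{k}\deg(f_{j})$ such finite places and at most one further at $x=\infty$, so $\deg L_{1}\le\sum_{j=1}^{k}\deg(f_{j})-1$; and in the single case where the $x=\infty$ correction is nonzero (namely when $\infty$ is neither a zero nor a pole of $f$) the conductor is smaller by one, so that correction is absorbed since $q^{n/2}\ge 1$. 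Combining these estimates gives $|S|\le\big(\sum_{j=1}^{k}\deg(f_{j})-1\big)q^{n/2}$.

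The step I expect to be the main obstacle is converting the algebraic hypothesis ``$f$ is not a $d$-th power in $\mathbb{F}(x)$'' into the geometric statement that $\chi\circ f$ is geometrically nontrivial --- equivalently, that the relevant cyclic cover of $\mathbb{P}^{1}$ is geometrically irreducible --- since it is this that excludes a degenerate $L$-factor (for which $S$ would be of size $q^{n}$) and so licenses the Weil bound. Bundled with the genus/conductor computation, this is precisely Weil's theorem on multiplicative character sums of rational functions, which I would either cite or reprove via the curve-theoretic argument sketched above; the remaining bookkeeping of the bounded correction terms, needed to make the final inequality exact rather than merely asymptotic, is routine.
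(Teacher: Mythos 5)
The paper gives no proof of this statement at all: Lemma~2.1 is quoted verbatim from the cited reference (Theorem~5.5 of Fu--Wan), so there is no internal argument to compare yours against. What you have sketched is, in substance, the standard proof of that cited theorem: realise the sum on the Kummer cover $y^{d}=f(x)$, factor the zeta function into the $L$-polynomials of the characters $\chi^{i}\circ f$, observe that the hypothesis ``$f\neq h^{d}$ over the algebraic closure'' is exactly geometric nontriviality of $\chi\circ f$ (constants are $d$-th powers in $\overline{\mathbb{F}}_{q^n}$ since $p\nmid d$, so no separate constant factor is needed), and bound $\deg L_{1}$ by the tame conductor formula $\deg L_{1}=\deg(\mathfrak{f})-2\leq\sum_{j}\deg f_{j}-1$. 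This is all correct. The one point where your bookkeeping is slightly too optimistic is the claim that the only correction to $S=-\sum(\text{inverse roots of }L_{1})$ comes from $x=\infty$: any finite zero or pole $P$ of $f$ with $d\mid\mathrm{ord}_{P}(f)$ is likewise an unramified point that is excluded from $S$ but present in the $L$-function trace, contributing a further root of unity. Each such point, however, also lowers the conductor degree by $\deg P\geq1$ and hence the main term by at least $q^{n/2}\geq1$, so these corrections are absorbed exactly as your $\infty$-correction is, and the stated bound survives. In short, your proposal reproves the imported black box; the paper takes the other branch of your ``cite or reprove'' alternative and simply cites it.
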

\begin{lemma}\label{l2.2}
	\cite[Theorem 5.6]{Fu} Let $f,g \in \mathbb{F}_{q^n}(x)$ be rational functions. Write $f=\prod_{j=1}^{k}f_j^{n_j}$, where $f_j\in \mathbb{F}_{q^n}[x]$ are irreducible polynomials and $n_j$ are non-zero integers. Let $D_1=\sum_{j=1}^{k}\mathrm{deg}(f_j)$, $D_2=max\{\mathrm{deg}(g),0\}$, $D_3$ be the degree of denominator of $g$ and $D_4$ be the sum of degrees of those irreducible polynomials dividing the denominators of $g$, but distinct from $f_j \ ;\ 1\leq j\leq k.$ Let $\chi$ be the multiplicative character of $\mathbb{F}_{q^n}^*$ and $\psi$ be the non-trivial additive character of $\mathbb{F}_{q^n}.$ Suppose that $g\neq h^{q^n}-h$ in $\mathbb{F}(x),$ where $\mathbb{F}$ is the algebraic closure of $\mathbb{F}_{q^n}.$ Then we have 
	$$\Big |\sum_{\substack{\alpha \in \mathbb{F}_{q^n},f(\alpha)\neq 0,\infty \\ g(\alpha)\neq \infty}} \chi(f(\alpha))\psi(g(\alpha))\Big|\leq (D_1+D_2+D_3+D_4-1)q^{n/2}.$$ 
\end{lemma}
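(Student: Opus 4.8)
The plan is to recognise the displayed sum as a Frobenius trace on the $\ell$-adic cohomology of an appropriate rank-one sheaf on the affine line, and then to extract the bound from the Grothendieck--Lefschetz trace formula, Deligne's weight estimate, and the Grothendieck--Ogg--Shafarevich (Euler--Poincar\'e) formula. Concretely, I would first replace $\psi$ by the canonical additive character $\psi_0$, absorbing the twisting constant into $g$ (this changes none of $D_2,D_3,D_4$), and then form the lisse rank-one sheaf $\mathcal{L}:=\mathcal{L}_{\chi(f)}\otimes\mathcal{L}_{\psi_0(g)}$ on the open curve $U\subseteq\mathbb{P}^1_{\mathbb{F}_{q^n}}$ on which it is unramified, whose Frobenius trace at a rational point $\alpha$ is $\chi(f(\alpha))\psi_0(g(\alpha))$. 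The non-degeneracy hypothesis $g\neq h^{q^{n}}-h$ in $\mathbb{F}(x)$ (together with, in Fu's precise formulation, a mild condition preventing the multiplicative contribution from being itself trivial) says exactly that $\mathcal{L}$ is geometrically non-trivial; this forces $H^{0}_{c}(U_{\mathbb{F}},\mathcal{L})=H^{2}_{c}(U_{\mathbb{F}},\mathcal{L})=0$, so that the sum equals $-\mathrm{Tr}\big(\mathrm{Frob}_{q^n}\mid H^{1}_{c}(U_{\mathbb{F}},\mathcal{L})\big)$ up to a boundary term at $\infty$ of absolute value at most $1$, and since $\mathcal{L}$ is pointwise pure of weight $0$, Deligne's theorem bounds this by $\dim H^{1}_{c}(U_{\mathbb{F}},\mathcal{L})\cdot q^{n/2}$.

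It then remains to prove $\dim H^{1}_{c}(U_{\mathbb{F}},\mathcal{L})\le D_1+D_2+D_3+D_4-1$. For a rank-one sheaf on $\mathbb{P}^1$ the Euler--Poincar\'e formula gives $\dim H^{1}_{c}(U_{\mathbb{F}},\mathcal{L})=\#(\mathbb{P}^1\setminus U)-2+\sum_{x\in\mathbb{P}^1\setminus U}\mathrm{Swan}_x(\mathcal{L})$, all quantities counted with residue degrees. The tame Kummer factor $\mathcal{L}_{\chi(f)}$ contributes no Swan conductor and is ramified only at the zeros and poles of $f$ (whose degrees sum to $D_1$) and possibly at $\infty$; the Artin--Schreier factor $\mathcal{L}_{\psi_0(g)}$ is ramified exactly at the poles of $g$, with local Swan conductor there equal to the pole order, so that $\sum_{x\in\mathbb{A}^1}\mathrm{Swan}_x(\mathcal{L})=D_3$ and $\mathrm{Swan}_\infty(\mathcal{L})=D_2$. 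The poles of $g$ that are not already zeros or poles of $f$ add a further $D_4$ points to $\mathbb{P}^1\setminus U$, and $\infty$ adds at most one more, so $\#(\mathbb{P}^1\setminus U)\le D_1+D_4+1$; substituting yields $\dim H^{1}_{c}\le(D_1+D_4+1)-2+(D_3+D_2)=D_1+D_2+D_3+D_4-1$, as required (with equality precisely when $\mathcal{L}$ is also ramified at $\infty$).

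The step I expect to be the real obstacle is the evaluation of the local Swan conductors of $\mathcal{L}_{\psi_0(g)}$: the identity ``Swan conductor $=$ pole order'' is valid only when the pole order is prime to $p$, so in general one must first replace $g$ by a function $g-(h^{p}-h)$ all of whose pole orders, at finite places and at $\infty$, are prime to $p$. Such an $h$ exists because Frobenius is surjective on the relevant residue fields; the substitution leaves the character sum unchanged, since $\psi_0$ annihilates $h^{p}-h$ on $\mathbb{F}_{q^n}$; and, crucially, it does not increase any of $D_2,D_3,D_4$. (Here $h^{q^{n}}-h$ is itself of the form $H^{p}-H$ after telescoping, which is why the hypothesis can be stated using $q^{n}$.) Making this reduction precise, and checking that it is compatible with the geometric non-triviality of $\mathcal{L}$, is where all the care is needed; the remaining ingredients --- the normalisation, the identification with a Frobenius trace, and the appeals to the Lefschetz trace formula, Deligne's theorem, and the Euler--Poincar\'e formula --- are routine once it is in place.

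Alternatively, and this is the form in which the estimate appears in the classical literature (Weil, Bombieri), one can run the argument on curves directly: after the same pole-order reduction, pass to the smooth projective model of the Artin--Schreier--Kummer cover defined by $y^{d}=f(x)$ and $t^{p}-t=bg(x)$ over $\mathbb{P}^1$, bound its genus by the Riemann--Hurwitz formula, express the sum through the number of $\mathbb{F}_{q^n}$-rational points on this cover (decomposing the point count into the isotypic components indexed by the powers of $\chi$ and the additive characters, the non-degeneracy hypothesis ensuring the $(\chi,\psi)$-component is non-constant), and invoke Weil's Riemann Hypothesis for curves over finite fields. This yields the same constant $D_1+D_2+D_3+D_4-1$, arising now from the genus and the number of branch points rather than from $\dim H^{1}_{c}$ and the conductor.
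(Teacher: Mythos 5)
The paper offers no proof of this statement: Lemma~\ref{l2.2} is quoted verbatim as Theorem~5.6 of the cited reference [Fu], so there is no in-paper argument to measure you against. Your sketch reproduces, correctly in outline and with the right numerology, the standard $\ell$-adic proof used in that source --- realise the summand as the Frobenius trace of $\mathcal{L}_{\chi(f)}\otimes\mathcal{L}_{\psi(g)}$ on the open subset of $\mathbb{A}^1$ where it is lisse, kill $H^0_c$ and $H^2_c$ by geometric non-triviality, apply Deligne's weight bound, and control $\dim H^1_c$ via Grothendieck--Ogg--Shafarevich, which yields exactly $(D_1+D_4+1)-2+D_2+D_3$. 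You also correctly isolate the one genuinely delicate step, namely adjusting $g$ by $h^p-h$ so that all pole orders are prime to $p$ before identifying Swan conductors with pole orders; the only remaining care needed (which you acknowledge) is matching the stated hypothesis $g\neq h^{q^n}-h$ with the geometric non-triviality of the Artin--Schreier factor over $\overline{\mathbb{F}}_{q^n}$.
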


Let $d\mid q^n-1$ and $g\mid x^n-1.$ We denote the number of square free divisors of $d$ by $W(d)$ and of $g$  by $W(g).$
\begin{lemma}\label{l2.3}
	\cite[Lemma 2.9]{Lenstra} Let $q$ be a prime power and $n$ be a positive integer. Then we have $W(x^n-1)\leq2^{\frac{1}{2} (n+\mathrm{gcd}(n,q-1))}$. In particular, $W(x^n-1)\leq 2^n$ and $W(x^n-1)=2^n$ if and only if $n\mid(q-1)$. Furthermore, $W(x^n-1) \leq 2^{3n/4}$ if $n\nmid(q-1),$ since in this case, $\mathrm{gcd}(n,q-1)\leq n/2.$
\end{lemma}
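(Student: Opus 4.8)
The plan is to convert $W(x^n-1)$ into a count of Frobenius orbits on roots of unity and then bound the number of singleton orbits. First I would note that $W(x^n-1)=2^{r}$, where $r$ is the number of distinct monic irreducible factors of $x^n-1$ over $\mathbb{F}_q$, since a squarefree divisor is exactly a product of a subset of these irreducible factors. Writing $n=p^{b}t$ with $p\nmid t$ (where $p$ is the characteristic), we have $x^{n}-1=(x^{t}-1)^{p^{b}}$, so $x^n-1$ and $x^t-1$ have the same radical and hence the same value of $r$. This reduces the problem to the separable polynomial $x^t-1$, whose roots form the cyclic group $\mu_t$ of $t$-th roots of unity in $\overline{\mathbb{F}_q}$.

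Next I would use that the monic irreducible factors of $x^t-1$ over $\mathbb{F}_q$ are in bijection with the orbits of the Frobenius map $\sigma:\zeta\mapsto\zeta^{q}$ acting on $\mu_t$, so $r$ equals the number of such orbits. The orbits of size $1$ are precisely the $\zeta\in\mu_t$ fixed by $\sigma$, i.e. those with $\zeta^{q-1}=1$; these are the elements of $\mu_{\gcd(t,q-1)}$, of which there are exactly $\gcd(t,q-1)$, while every remaining orbit has size at least $2$. Counting $\mu_t$ according to orbit sizes gives
$$ r \le \gcd(t,q-1) + \frac{t-\gcd(t,q-1)}{2} = \frac{t+\gcd(t,q-1)}{2}. $$
Since $t\mid n$, every common divisor of $t$ and $q-1$ divides $\gcd(n,q-1)$, so $\gcd(t,q-1)\le\gcd(n,q-1)$; together with $t\le n$ this yields $r\le\tfrac12\big(n+\gcd(n,q-1)\big)$, which is exactly the asserted bound on $W(x^n-1)=2^{r}$.

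The three consequences then drop out of this inequality. Since $\gcd(n,q-1)\le n$ we get $W(x^n-1)\le 2^{n}$ unconditionally. If $n\mid q-1$, then in particular $p\nmid n$, so $t=n$ and $\mu_n\subseteq\mathbb{F}_q$; every orbit is a singleton, $r=n$, and $W(x^n-1)=2^{n}$. Conversely, $W(x^n-1)=2^{n}$ forces $r=n$, and then $n\le\tfrac12(n+\gcd(n,q-1))$ forces $\gcd(n,q-1)=n$, i.e. $n\mid q-1$. Finally, if $n\nmid q-1$ then $\gcd(n,q-1)$ is a proper divisor of $n$, hence at most $n/2$, so $r\le\tfrac12(n+n/2)=3n/4$ and $W(x^n-1)\le 2^{3n/4}$. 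I do not expect a genuine obstacle here; the only point needing a little care is the equality case, where one must observe that $n\mid q-1$ already excludes $p\mid n$, so the reduction from $n$ to $t$ loses no irreducible factors.
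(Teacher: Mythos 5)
Your proof is correct. The paper itself gives no argument for this lemma --- it simply cites Lenstra--Schoof --- and your reasoning is essentially the standard proof from that reference: reduce to the separable part $x^{t}-1$, identify irreducible factors with Frobenius orbits on $\mu_{t}$, observe that exactly $\gcd(t,q-1)$ orbits are singletons while all others have size at least $2$, and deduce $r\le\tfrac12\bigl(t+\gcd(t,q-1)\bigr)\le\tfrac12\bigl(n+\gcd(n,q-1)\bigr)$. The handling of the equality case (noting $n\mid q-1$ forces $p\nmid n$, so the passage from $n$ to $t$ loses nothing) and of the $n\nmid q-1$ case (a proper divisor of $n$ is at most $n/2$) is also correct, so nothing is missing.
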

\begin{lemma}\label{l2.4}{\cite[Lemma 3.7]{CoHuc}}
	For any $\eta \in \mathbb{N}$ and $\nu$ a positive real number, $W(\eta)\leq \C\cdot \eta^{1/\nu}$, where $\C=\frac{2^s}{(p_1p_2\ldots p_s)^{1/\nu}}$ and $p_1,p_2,\ldots p_s$ are the primes $\leq 2^\nu $ that divide $\eta.$
\end{lemma}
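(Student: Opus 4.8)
The plan is to exploit that $W(\eta)$, the number of squarefree divisors of $\eta$, is a multiplicative function: if $\eta=\prod_{r\mid\eta}r^{a_r}$ is the prime factorisation of $\eta$ (with each $a_r\geq1$), then $W(\eta)=2^{\omega(\eta)}$, where $\omega(\eta)$ is the number of distinct prime divisors of $\eta$. Hence $W(\eta)=\prod_{r\mid\eta}2$, and the ratio we want to bound factors over the primes dividing $\eta$ as
$$\frac{W(\eta)}{\eta^{1/\nu}}=\prod_{r\mid\eta}\frac{2}{r^{a_r/\nu}},$$
where $r$ runs over the distinct prime divisors of $\eta$.

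Next I would estimate each factor $2/r^{a_r/\nu}$ individually, splitting the primes into the ``large'' ones, $r>2^\nu$, and the ``small'' ones, $r\leq 2^\nu$. If $r>2^\nu$ then $r^{1/\nu}>2$, and since $a_r\geq1$ we get $r^{a_r/\nu}\geq r^{1/\nu}>2$, so the corresponding factor is strictly less than $1$ and may be discarded, only decreasing the product. If $r\leq 2^\nu$ then, again using $a_r\geq1$, we have $r^{a_r/\nu}\geq r^{1/\nu}$, so $2/r^{a_r/\nu}\leq 2/r^{1/\nu}$. Writing $p_1,\dots,p_s$ for precisely those primes $\leq 2^\nu$ that divide $\eta$, these two observations give
$$\frac{W(\eta)}{\eta^{1/\nu}}\leq\prod_{i=1}^{s}\frac{2}{p_i^{1/\nu}}=\frac{2^s}{(p_1p_2\cdots p_s)^{1/\nu}}=\C,$$
which is the asserted inequality. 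The edge case $s=0$ (in particular $\eta=1$) is handled by reading the empty product as $1$.

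There is no real obstacle here; the argument is just a routine separation of the prime factorisation into small and large primes. The single point worth stating carefully is that increasing the multiplicity $a_r$ of a small prime only shrinks the factor $2/r^{a_r/\nu}$, so the extremal case is $a_r=1$ for every small prime — which is exactly why the constant $\C$ depends only on \emph{which} primes $\leq 2^\nu$ divide $\eta$, not on their exponents.
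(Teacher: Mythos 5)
Your argument is correct and is the standard proof of this bound: the paper itself offers no proof, merely citing Cohen--Huczynska \cite[Lemma 3.7]{CoHuc}, and your small/large prime splitting of the factorisation of $W(\eta)/\eta^{1/\nu}$ is exactly the argument given there. The one sentence I would tidy is the justification for dropping the large primes: each factor $2/r^{a_r/\nu}$ with $r>2^\nu$ is less than $1$, so its \emph{presence} can only decrease the product, and omitting it therefore yields an upper bound.
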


If $\mathbb{F}_{q^n}$ is the extension field of $\mathbb{F}_{q}$ of degree $n$ such that gcd$(q,n)=1$ and $d$ is the least positive integer such that $q^d\equiv1$ (mod $n$), then from \cite[Theorem 2.45, 2.47]{Nieder}, $x^n-1$ splits into the product of irreducible polynomials of degree less than or equal to $d$ in $\mathbb{F}_{q}[x].$ Let $N_0$ denotes the number of distinct irreducible factors of $x^n-1$ over $\mathbb{F}_{q}$ of degree less than $d$ and  $\rho(q,n)$ denotes the ratio $\frac{N_0}{n}.$ If $m=n\cdot p^i$, where $p$ is a prime such that $q=p^k$ and coprime to $n$, then the number of irreducible factors of $x^m-1$ over $\mathbb{F}_{q}$ is the same as that of $x^n-1$ over $\mathbb{F}_{q}$, and hence, $m\rho(q,m)=n\rho(q,n).$ In \cite{SDEven}, Cohen provided the following bounds on the ratio $\rho(q,n)$.

\begin{lemma}\label{l2.5}{\cite[Lemma 7.1]{SDEven}}
	Let $q=2^k$ and $n$ be an odd integer. Then the following hold.
	\begin{enumerate}[$(i)$]
		\item If $q=2$ then  $\rho(2,3)=1/3; \ \rho(2,5)=1/5; \ \rho(2,9)=2/9; \ \rho(2,21)=4/21;$ otherwise $\rho(2,n)\leq1/6.$
		\item If $q=4$ then $\rho(4,9)= 1/3; \ \rho(4,5)=11/45;$ otherwise $\rho(4,n)\leq1/5.$
		\item If $q=8$ then $\rho(8,3)= \rho(8,21)=1/3;$ otherwise $\rho(8,n)\leq1/5.$
		\item  If $q\geq16$ then $\rho(q,n)\leq 1/3.$
	\end{enumerate}
\end{lemma}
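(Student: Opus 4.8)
The plan is to reduce $\rho(q,n)$ to the arithmetic of the multiplicative order of $q$ modulo the divisors of $n$, extract a closed formula for $N_0$, and estimate it. Since the number of irreducible factors of $x^m-1$ over $\mathbb{F}_q$ is unchanged when $m$ is multiplied by powers of $p$, one may assume $\gcd(q,n)=1$, which here is automatic as $q=2^k$ and $n$ is odd. The irreducible factors of $x^n-1$ over $\mathbb{F}_q$ correspond to the $q$-cyclotomic cosets modulo $n$: for each divisor $e$ of $n$ the $\phi(e)$ primitive $e$-th roots of unity fall into $\phi(e)/\mathrm{ord}_e(q)$ Frobenius orbits, each giving one irreducible factor of degree $\mathrm{ord}_e(q)$, and $\mathrm{ord}_e(q)\mid \mathrm{ord}_n(q)=d$ for every such $e$. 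Grouping the $n$-th roots of unity by the subfield containing them, $c_j:=\gcd(n,q^j-1)$ is the number of them lying in $\mathbb{F}_{q^j}$, and M\"obius inversion gives $\tfrac1j\sum_{i\mid j}\mu(j/i)c_i$ irreducible factors of degree exactly $j$. Summing this over the divisors $j<d$ of $d$ and interchanging the order of summation yields the identity
\begin{equation*}
N_0=\frac1d\sum_{\substack{i\mid d\\ i<d}}c_i\bigl(\phi(d/i)-\mu(d/i)\bigr),\qquad c_i=\gcd(n,q^i-1),
\end{equation*}
with $N_0=0$ when $d=1$.

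Next I would use two elementary facts: $\phi(k)-\mu(k)\ge 0$ for every $k\ge 2$, and $\sum_{k\mid d,\,k>1}\bigl(\phi(k)-\mu(k)\bigr)=d$ for $d\ge 2$ (immediate from $\sum_{k\mid d}\phi(k)=d$ and $\sum_{k\mid d}\mu(k)=0$). Together these show that $\rho(q,n)=N_0/n$ is a weighted average of the ratios $c_i/n$ with non-negative weights summing to $1$. For $i<d=\mathrm{ord}_n(q)$ we have $n\nmid q^i-1$, so each $c_i=\gcd(n,q^i-1)$ is a proper divisor of $n$; as $n$ is odd this forces $c_i\le n/3$. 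Hence $\rho(q,n)\le 1/3$ for every odd $n$ coprime to $q$, which in particular gives part $(iv)$.

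For the sharper bounds ($1/6$, $1/5$) in parts $(i)$--$(iii)$ one must prevent the $c_i$ from being close to $n/3$ and isolate the finitely many $n$ where they are. Here I would also use the size estimate $c_i\le q^i-1$, which is strong exactly when the divisors $i<d$ of $d$ are small (the largest being $d/\ell$ with $\ell$ the least prime factor of $d$), together with the fact that every prime of $n$ whose $q$-order does not divide $i$ is a factor of $n/c_i$, so that $n$ is forced to be large once $d$ is large. Feeding these into the weighted average, the ratio falls below the stated target once $n$ (equivalently $d$) exceeds an explicit small bound, leaving only a finite list of pairs $(q,n)$ with $q\in\{2,4,8\}$ to inspect directly; this inspection, combined with the scaling $m\rho(q,m)=n\rho(q,n)$ for $m=np^i$, reproduces precisely the exceptional moduli recorded in the statement.

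The step I expect to be the main obstacle is this last finite verification for $q\in\{2,4,8\}$: the weighted average is inflated only by primes $r\mid n$ with $\mathrm{ord}_r(q)$ small, i.e. by small prime factors of $q^i-1$ for small $i$, and since numbers such as $2^6-1=4^3-1=8^2-1=63=3^2\cdot 7$ carry several such factors, one must track carefully how these combine inside $n$ — it is exactly this interaction that produces the genuine exceptions. In practice the check is cleanest to carry out by a short computation once the preceding estimates have confined the range of $n$ to be examined.
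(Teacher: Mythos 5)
First, a point of comparison: the paper does not prove this lemma at all — it is quoted verbatim from \cite[Lemma 7.1]{SDEven} — so any actual argument you give is already ``different'' from the paper's treatment. Your reduction is correct as far as it goes: the identity $N_0=\frac1d\sum_{i\mid d,\ i<d}c_i\bigl(\phi(d/i)-\mu(d/i)\bigr)$ with $c_i=\gcd(n,q^i-1)$ checks out (I verified the M\"obius-inversion and interchange step, and the test values $\rho(2,9)=2/9$, $\rho(2,21)=4/21$, $\rho(4,45)=11/45$), the two elementary facts about $\phi-\mu$ are right, and the resulting weighted-average argument with $c_i\le n/3$ is a clean, complete proof of the uniform bound $\rho(q,n)\le 1/3$, hence of part $(iv)$.

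The genuine gap is in parts $(i)$--$(iii)$, which is where the actual content of the lemma sits: the sharper constants $1/6$ and $1/5$ together with the exact exceptional moduli. For these you only describe a strategy. You never derive the ``explicit small bound'' on $n$ (or $d$) beyond which $\rho$ drops below the target, and you never carry out the finite inspection; neither step is routine, since $\rho$ is a weighted average of the quantities $c_{d/k}/n$ with weights $(\phi(k)-\mu(k))/d$, and one must rule out, e.g.\ for $q=2$, every odd $n$ for which some $c_i$ with large weight equals $n/3$ or $n/5$, other than $n=3,5,9,21$. A symptom that the verification was not done: the statement as printed asserts $\rho(4,5)=11/45$, which is impossible ($5\cdot 11/45$ is not an integer; in fact $\rho(4,5)=1/5$, and the intended entry is $\rho(4,45)=11/45$, as the paper's own later use ``$\rho(4,m)\le 1/5$ for $m\ne 9,45$'' in Lemma \ref{l5.4} confirms). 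A completed argument along your lines would have caught and corrected this; the sketch cannot. So as it stands the proposal establishes $(iv)$ but not $(i)$--$(iii)$.
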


\section{Primitive pairs in $\mathbb{F}_{2^{k}}$}\label{Sec3}

Let $f=\frac{f_1}{f_2}\in \mathbb{F}_{2^k}(x)$, where deg$f_1=$ deg$f_2=2$ and $f_2\nmid f_1.$
For each such rational function $f,$ R. K. Sharma et al. \cite{Awasthi} proved that there always exists a primitive pair $(\alpha,f(\alpha))$ in $\mathbb{F}_{2^k}$ unless $k=1,2,4,6,8,9,10,12.$ Moreover, they found counter examples for $k=1,2,4$ and conjectured that the existence of such a primitive pair is guaranteed for $k=6,8,9,10,12.$ Recently, this conjecture has been proved for the case $k=9$ by Carvalho et al. \cite{CarNeu}. 

Now, we show that there exists a rational function $f=\frac{f_1}{f_2}\in \mathbb{F}_{2^3}(x)$ such that $(\alpha,f(\alpha))$ is not a primitive pair for any primitive element $\alpha$ in $\mathbb{F}_{2^3}^*.$ Note that $2^3-1=7$ is a prime, hence, every non-trivial element of $\mathbb{F}_{2^3}^*$ is a primitive element. Clearly, for any primitive element $\alpha\in \mathbb{F}_{2^3}^*$, $f(\alpha)$ cannot be a primitive element if $f_1(\alpha)=0$ or $f_2(\alpha)=0$ or $f_1(\alpha)=f_2(\alpha).$ These equations can be satisfied by at most 6 distinct primitive elements. Since $\mathbb{F}_{2^3}^*$ contains exactly 6 primitive elements, there may exists a rational function $f$ for which $f(\alpha)$ is not a primitive element for any primitive element $\alpha$. Indeed, such a rational function exists and we give the following example. Hence, the case $k=3$ is also a genuine exception.

\begin{eg}
	Let  $f=\dfrac{f_1}{f_2}=\dfrac{\beta x^2+x+\beta}{x^2+(\beta+1)x+1},$ where $\beta$ is a root of primitive polynomial \ $x^3+x+1$  over $\mathbb{F}_2.$ Then $f(\alpha)$ is not a primitive element for any primitive element $\alpha$ of \ $\mathbb{F}_{2^3}$, because $f_1(\beta^3)=f_1(\beta^4)=0,\ f_2(\beta^2)=f_2(\beta^5)=0$ and   $f_1(\beta^i)=f_2(\beta^i)$ for $i=1,6.$ 
\end{eg}


\section{A sufficient condition }\label{Sec4}
In this section, we provide a sufficient condition for the existence of a primitive normal pair $(\alpha, f(\alpha))$ in $\mathbb{F}_{q^n}$ over $\mathbb{F}_q$ for every rational function $f$ belongs to the set $R_{q^n}(m_1,m_2)$, where:
\begin{de}\label{def4.1}
	 The set
	 $R_{q^n}(m_1,m_2)$ is the collection of rational functions $f\in \mathbb{F}_{q^n}(x)$ with the simplest form $\frac{f_1}{f_2},$ where $m_1,m_2$ are the degrees of $f_1,f_2$ respectively, such that 
	\begin{enumerate}[$(i)$]
		\item $f\neq cx^jh^d$ for  any $ c\in \mathbb{F}_{q^n}^*$, any $ j\in \mathbb{Z}$, any $h\in \mathbb{F}_{q^n}(x)$,  and any prime divisor $ d$ of $q^n-1,$
		\item there exists at least one monic irreducible factor $g$ of $f_2$ in $\mathbb{F}_{q^n}[x]$ with multiplicity $r$ such that $q^n\nmid r.$
\end{enumerate} 
\end{de}
Note that, (ii) part of the above definition implies that $m_2\geq1.$ For the case $m_2=0,$ i.e. for polynomials $f\in \mathbb{F}_{q^n}[x]$, the existence of a primitive normal element $\alpha \in \mathbb{F}_{q^n}$ over $\mathbb{F}_q$ such that $f(\alpha)$ is also a primitive normal element in $\mathbb{F}_{q^n}$ over $\mathbb{F}_q$ has been studied in \cite{rani}.

Now, for a given $f \in R_{q^n}(m_1,m_2)$, $l_1,l_2 \mid q^n-1$ and  $h_1,h_2\mid x^n-1,$ we denote $N_f(l_1,l_2,h_1,h_2)$ as the number of $\alpha \in \mathbb{F}_{q^n}$ such that $\alpha$ is $l_1$-free as well as $h_1$-free and $f(\alpha)$ is $l_2$-free as well as $h_2$-free. We also denote $Q_p(m_1,m_2)$ as the collection of pairs $(q,n)$ for which  there always exists a primitive normal pair $(\alpha,f(\alpha))$ in $\mathbb{F}_{q^n}$ over $\mathbb{F}_q$ for every $f\in R_{q^n}(m_1,m_2).$
Now, we give the following sufficient condition.

\begin{theorem}\label{T4.1}
	Let $q=p^k$, for some prime $p$; $k\in \mathbb{N}$, $n$ be a positive integer and  $f \in R_{q^n}(m_1,m_2)$ be a rational function. Then $N_f(l_1,l_2,h_1,h_2)>0$ if
	$$q^{n/2}> \left\{\begin{array}{lll}
	(2m_1+m_2)W(l_1)W(l_2)W(h_1)W(h_2)&;& \text{if}\ m_1>m_2\\
	(m_1+2m_2+1)W(l_1)W(l_2)W(h_1)W(h_2)&;& \text{if}\ m_1\leq m_2
	\end{array}\right..$$
	
\end{theorem}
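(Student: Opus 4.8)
The plan is to express $N_f(l_1,l_2,h_1,h_2)$ using the characteristic functions $\rho_e$ (for the $e$-free condition) and $\kappa_g$ (for the $g$-free condition) recalled in Section~\ref{Sec2}, applied to $\alpha$ and to $f(\alpha)$. Concretely,
\[
N_f(l_1,l_2,h_1,h_2)=\sum_{\alpha}\rho_{l_1}(\alpha)\,\kappa_{h_1}(\alpha)\,\rho_{l_2}(f(\alpha))\,\kappa_{h_2}(f(\alpha)),
\]
where the sum runs over $\alpha\in\mathbb F_{q^n}$ with $f(\alpha)\neq 0,\infty$ (the finitely many excluded points contribute a bounded error that can be absorbed, but it is cleanest to note that such $\alpha$ cannot give a primitive normal value anyway). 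Expanding each of the four characteristic functions into its defining sum over divisors and characters turns $N_f$ into a weighted quadruple sum
\[
N_f=\frac{\phi(l_1)\phi(l_2)}{l_1 l_2}\cdot\frac{\Phi_q(h_1)\Phi_q(h_2)}{q^{\deg h_1+\deg h_2}}\sum_{d_1\mid l_1}\sum_{d_2\mid l_2}\sum_{e_1\mid h_1}\sum_{e_2\mid h_2}\frac{\mu(d_1)\mu(d_2)\mu'(e_1)\mu'(e_2)}{\phi(d_1)\phi(d_2)\Phi_q(e_1)\Phi_q(e_2)}\,\Theta,
\]
where $\Theta=\Theta(d_1,d_2,e_1,e_2)$ is the inner character sum
\[
\Theta=\sum_{\chi_{d_1},\chi_{d_2},\psi_{e_1},\psi_{e_2}}\ \sum_{\alpha}\chi_{d_1}(\alpha)\chi_{d_2}(f(\alpha))\psi_{e_1}(\alpha)\psi_{e_2}(f(\alpha)).
\]
Writing $\psi_{e_1}(\alpha)=\psi_0(y_1\alpha)$ and $\psi_{e_2}(f(\alpha))=\psi_0(y_2 f(\alpha))$ for suitable $y_1,y_2\in\mathbb F_{q^n}$, the innermost sum over $\alpha$ becomes $\sum_\alpha \chi_{d_1}(\alpha)\chi_{d_2}(f(\alpha))\,\psi_0\!\big(y_1\alpha+y_2 f(\alpha)\big)$, which is exactly the type of mixed character sum estimated in Lemma~\ref{l2.2}, with the multiplicative part $F(x)=x^{a}f(x)^{b}$ (for appropriate exponents $a,b$ determined by the characters) and the additive part $G(x)=y_1 x+y_2 f(x)$.

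The core of the argument is the separation into a ``generic'' term and an ``error'' term. The generic term is the one with $d_1=d_2=1$ and $e_1=e_2=1$: then all characters are trivial, $\Theta$ reduces to the count of admissible $\alpha$, which is $q^n$ up to $O(m_1+m_2)$ from the points where $f$ vanishes or blows up. For every other choice of $(d_1,d_2,e_1,e_2)$, at least one character is non-trivial, so Lemma~\ref{l2.2} applies and bounds the inner $\alpha$-sum by $(D_1+D_2+D_3+D_4-1)q^{n/2}$; here one must check that the hypotheses of that lemma hold, i.e. that $G=y_1 x+y_2 f(x)$ is never of the form $h^{q^n}-h$ (immediate, since $\deg G$ is small and positive whenever the additive part is non-trivial, and when $y_1=y_2=0$ one falls back on Lemma~\ref{l2.1} using condition $(i)$ of Definition~\ref{def4.1}, which guarantees $F$ is not a perfect $d$-th power). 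One then computes $D_1+D_2+D_3+D_4$ in terms of $m_1,m_2$: when $m_1>m_2$ the numerator $f_1$ dominates and the bound works out to $2m_1+m_2$; when $m_1\le m_2$ the denominator $f_2$ and the shape of $y_1x+y_2f(x)$ give $m_1+2m_2+1$. This case split is precisely the source of the two-line bound in the statement.

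Finally I would assemble the estimate: bounding $|\mu|\le 1$, $|\mu'|\le1$, and using that the number of characters of order $d$ is $\phi(d)$ (cancelling the $\phi(d)$ in the denominator) and similarly $\Phi_q(e)$ additive characters of $\mathbb F_q$-order $e$, the number of terms collapses so that $\sum|\Theta|/(\cdots)$ is controlled by $W(l_1)W(l_2)W(h_1)W(h_2)$ times the geometric bound. Pulling the leading constants $\phi(l_i)/l_i<1$ and $\Phi_q(h_i)/q^{\deg h_i}<1$ out front, one gets
\[
N_f\ \ge\ \frac{\phi(l_1)\phi(l_2)}{l_1 l_2}\cdot\frac{\Phi_q(h_1)\Phi_q(h_2)}{q^{\deg h_1+\deg h_2}}\Big(q^n-(2m_1+m_2)\,q^{n/2}\,W(l_1)W(l_2)W(h_1)W(h_2)\Big)
\]
in the case $m_1>m_2$ (and the analogue with $m_1+2m_2+1$ otherwise), so $N_f>0$ as soon as $q^{n/2}$ exceeds the stated quantity. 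The main obstacle I anticipate is the careful bookkeeping of the effective degree $D_1+D_2+D_3+D_4$ of the character sum — in particular handling the cases $y_1=0$ or $y_2=0$ separately, and verifying that condition $(ii)$ of Definition~\ref{def4.1} (some irreducible factor of $f_2$ has multiplicity not divisible by $q^n$) is what prevents $y_1x+y_2f(x)$ from degenerating — rather than the analytic estimate itself, which is a direct appeal to Lemmas~\ref{l2.1} and \ref{l2.2}.
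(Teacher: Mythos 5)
Your proposal follows the paper's proof essentially step for step: the same expansion of $N_f$ via the characteristic functions $\rho$ and $\kappa$ over $\alpha\in\mathbb{F}_{q^n}\setminus S$, the same reduction of the inner sum to a mixed character sum with $F(x)=x^{n_1}f^{n_2}$ and $G(x)=y_1x+y_2f$, the same appeal to Lemmas \ref{l2.1} and \ref{l2.2}, and the same degree bookkeeping that produces $2m_1+m_2$ versus $m_1+2m_2+1$ from $D_1+D_2+D_3+D_4-1$.

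The one step whose justification would not survive being written out is the parenthetical claim that $G\neq h^{q^n}-h$ is ``immediate, since $\deg G$ is small and positive.'' That degree argument disposes only of the case $y_2=0$, $y_1\neq 0$, where $G=y_1x$ is a polynomial and $h^{q^n}-h=y_1x$ would force $q^n\deg h=1$. When $y_2\neq 0$ and $f$ has a nontrivial denominator, $G$ is a genuine rational function and $h$ may be one as well, so no bound on $\deg G$ rules out the Artin--Schreier form; the actual obstruction is that $h^{q^n}-h=(h_1^{q^n}-h_1h_2^{q^n-1})/h_2^{q^n}$ has, in lowest terms, denominator $h_2^{q^n}$, so $G=h^{q^n}-h$ would force $f_2$ to be a perfect $q^n$-th power of a polynomial, which is exactly what condition $(ii)$ of Definition \ref{def4.1} forbids. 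You do name condition $(ii)$ as the relevant hypothesis in your closing sentence, so this is a gap of justification rather than of missing ideas; the paper carries out precisely this denominator comparison in its Equation \eqref{Eq3}. Two further points the paper makes explicit and you should not elide: the terms with $g_1=g_2=1$ but $(d_1,d_2)\neq(1,1)$ form a separate middle case in which only Lemma \ref{l2.1} is available (with the weaker bound $(m_1+m_2)q^{n/2}$, which is absorbed by $M$), and the application of Lemma \ref{l2.1} itself requires showing that $F=h^{q^n-1}$ forces $n_1=n_2=0$, which is a multiplicity-comparison argument resting on condition $(i)$, not merely the observation that $F$ ``is not a perfect $d$-th power.''
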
 
\begin{proof}
	Let $S$ be the set containing 0, and zeroes and poles of $f$, then by definition
	\begin{equation*}
	N_f(l_1,l_2,h_1,h_2)=\sum_{\alpha \in \mathbb{F}_{q^n} \setminus S}\rho_{l_1}(\alpha)\rho_{l_2}(f(\alpha))\kappa_{h_1}(\alpha)\kappa_{h_2}(f(\alpha))
	\end{equation*}
	\begin{equation}\label{Eq1}
	N_f(l_1,l_2,h_1,h_2)=\ {\theta} {\Theta}\sum_{\substack{d_1|l_1,d_2|l_2\\ g_1|h_1,g_2|h_2}} \dfrac{\mu(d_1)\mu(d_2)\mu'(g_1)\mu'(g_2)}{\phi(d_1)\phi(d_2)\Phi(g_1)\Phi(g_2)} \sum_{\substack{\chi_{d_1},\chi_{d_2}\\\psi_{g_1},\psi_{g_2}}}{\chi}_{d_1,d_2,g_1,g_2}
	\end{equation}
	where ${\theta}=\frac{\phi(l_1)\phi(l_2)}{l_1l_2}$, ${\Theta}=\frac{\Phi(h_1)\Phi(h_2)}{q^{\mathrm{deg}(h_1)}q^{\mathrm{deg}(h_2)}}$ and $${\chi}_{d_1,d_2,g_1,g_2}=	\sum\limits_{\alpha \in \mathbb{F}_{q^n}\setminus S}\chi_{d_1}(\alpha)\chi_{d_2}(f(\alpha))\psi_{g_1}(\alpha)\psi_{g_2}(f(\alpha)).$$
	
	Let $d_1,d_2 \mid q^n-1$, then $\chi_{d_1}=\chi_{q^n-1}^{n_1}$ and  $\chi_{d_2}=\chi_{q^n-1}^{n_2}$ for some multiplicative character $\chi_{q^n-1}$ of order $q^n-1,$ where $n_1\in \{0,1,\dots,q^n-2\}$ and  $n_2=(q^n-1)/d_2.$ Moreover, let $g_1,g_2 \mid x^n-1,$ then there exist $y_1,y_2 \in \mathbb{F}_{q^n}$ such that $\psi_{g_i}(\alpha)=\psi_{0}(y_i\alpha)$; $i=1,2$, where $\psi_0$ is the canonical additive character of $\mathbb{F}_{q^n}.$ Hence,
	\begin{align*}
	{\chi}_{d_1,d_2,g_1,g_2}&=	\sum_{\alpha \in \mathbb{F}_{q^n}\setminus S}\chi_{q^n-1}(\alpha^{n_1}(f(\alpha)^{n_2})\psi_{0}(y_1\alpha+y_2f(\alpha))\\
	&=	\sum_{\alpha \in \mathbb{F}_{q^n}\setminus S}\chi_{q^n-1}(F(\alpha))\psi_{0}(G(\alpha)),
	\end{align*}
	where $F(x)=x^{n_1}f^{n_2}$ and $G(x)=y_1x+y_2f$ are the polynomials in $\mathbb{F}_{q^n}(x)$ and $\mathbb{F}(x)$ respectively. \\
	
	{\it Claim :} If $F(x)= h^{q^n-1}$ for some $h \in \mathbb{F}(x)$, then $d_1=d_2=1$, and if $G(x)= h^{q^n}-h$ for some $h \in \mathbb{F}(x)$, then $g_1=g_2=1,$ where $\mathbb{F}$ is the algebraic closure of $\mathbb{F}_{q^n}.$
	
	{\it Proof of the claim :}
	Firstly, let $F(x)=h^{q^n-1}$ for some $h\in \mathbb{F}(x).$ Write $f=c_1x^jf_1/f_2$ and $h=c_2x^kh_1/h_2$, where $c_1\in \mathbb{F}_{q^n}^*$, $c_2\in \mathbb{F}^*$, $j,k\in \mathbb{Z}$, $f_1,f_2$ and $h_1,h_2$ are monic polynomials besides $x$ in $\mathbb{F}_{q^n}[x]$ and $\mathbb{F}[x]$ respectively, such that $\mathrm{gcd}(f_1,f_2)=1$ and $\mathrm{gcd}(h_1,h_2)=1$. Then 
	\begin{equation}\label{Eq2}
	c_1^{n_2}x^{n_1+jn_2}f_1^{n_2}h_2^{q^n-1}=c_2^{q^n-1}x^{k(q^n-1)}h_1^{q^n-1}f_2^{n_2}.
	\end{equation}
	Now, let $f_1=\prod_{i=1}^{t}f_{1i}^{s_i}$ be the decomposition of $f_1$ into monic irreducible polynomials over $\mathbb{F}_{q^n}$, where $s_i$'s are positive integers. Since $\mathbb{F}$ is the algebraic closure of $\mathbb{F}_{q^n}$, $f_{1i}$ completely reduces into distinct linear factors over $\mathbb{F}.$ From Equation \eqref{Eq2}, it is clear that, each such linear factor of $f_{1i}$ divides $h_1$ over $\mathbb{F}$, since gcd$(f_1,f_2)=1$ and $x\nmid f_1$. Therefore, $f_{1i}\mid h_1$ over $\mathbb{F}$ for all $i=1,2,\ldots,t.$ By comparing the degrees of the factors $f_{1i}^{s_in_2}$ on both sides of Equation \eqref{Eq2} over $\mathbb{F}$, we get $s_in_2=k_i(q^n-1)$ i.e. $s_i=k_id_2$ for some positive integer $k_i$; $i=1,2,\ldots,t.$ Hence, $f_1={f_1'}^{d_2}$ for some ${f_1'}\in \mathbb{F}_{q^n}[x].$ Similarly, $f_2={f_2'}^{d_2}$ for some ${f_2'}\in \mathbb{F}_{q^n}[x].$ Thus, $f=c_1x^j{f'}^{d_2}$, where ${f'}={f_1'}/f_{2}'.$ Since $f\in R_{q^n}(m_1,m_2)$, $d_2=1$, and hence, $n_2=q^n-1.$ By comparing the degree of $x$ on both sides of Equation \eqref{Eq2} over $\mathbb{F}$, we get $n_1+jn_2=k(q^n-1)$. This implies $n_1=(k-j)(q^n-1)$. Which further implies $n_1=0$, since $0\leq n_1<q^n-1.$ Hence, if $F(x)=h^{q^n-1}$ for some $h\in \mathbb{F}(x),$ then $n_1=n_2=0$ that means $d_1=d_2=1.$
	
	Secondly, let $G(x)=h^{q^n}-h$ for some $h\in \mathbb{F}(x)$ and at least one of $y_1$ and $y_2$ is non-zero. Write $f=\frac{f_1}{f_2}$ and $h=\frac{h_1}{h_2}$, where $f_1,f_2$ and $h_1,h_2$ are polynomials in $\mathbb{F}_{q^n}[x]$ and $\mathbb{F}[x]$ respectively, such that $\mathrm{gcd}(f_1,f_2)=1$ and $\mathrm{gcd}(h_1,h_2)=1$, then we have 
	\begin{equation}\label{Eq3}
	h_2^{q^n}(y_1xf_2+y_2f_1)=f_2(h_1^{q^n}-h_1h_2^{q^n-1}).
	\end{equation}
	For $y_2\neq0$, the conditions gcd$(f_1,f_2)=1$ and gcd$(h_1,h_2)=1$ imply that $f_2=h_2^{q^n}$ in Equation \eqref{Eq3}.
    From the definition of $R_{q^n}(m_1,m_2)$, there exists a monic irreducible factor $g\in \mathbb{F}_{q^n}[x]$ of $f_2$ with multiplicity $r$ such that $q^n\nmid r.$ So, by comparing the multiplicity of this factor on both sides of the equation $f_2=h_2^{q^n},$ we get $r=sq^n$, where $s$ is the multiplicity of $g$ in the factorization of $h_2$ over $\mathbb{F}.$ Clearly, it is not possible as $q^n \nmid r.$ Therefore, $y_2=0.$ Now, Equation \eqref{Eq3} becomes $y_1xh_2^{q^n}=h_1^{q^n}-h_1h_2^{q^n-1}.$ Moreover, $h_2$ becomes a non-zero constant in $\mathbb{F}$, since gcd$(h_1,h_2)=1$. For $y_1\neq 0$, the equation $y_1xh_2^{q^n}=h_1^{q^n}-h_1h_2^{q^n-1}$ implies that $1=q^n\mathrm{deg}(h_1),$ which is not possible. Therefore, $y_1=0$. Hence, if $G(x)=h^{q^n}-h$ for some $h\in \mathbb{F}(x)$, then $y_1=y_2=0$ that means $g_1=g_2=1$.
	
	Now, let $G(x)\neq h^{q^n}-h$ for any $h\in \mathbb{F}(x)$, then by Lemma \ref{l2.2}, we get
	$$|{\chi}_{d_1,d_2,g_1,g_2}|\leq Mq^{n/2},\  
	\text{where}\  M=\left\{\begin{array}{lll}
	2m_1+m_2&;& \text{if}\ m_1>m_2\\
	m_1+2m_2+1&;& \text{if}\ m_1\leq m_2
	\end{array}\right..$$ 
	
	If $G(x)=h^{q^n}-h$ for some $h\in \mathbb{F}(x),$ then from our claim, we have $g_1=g_2=1.$  Additionally, if $F(x)\neq h^{q^n-1}$ for any $h \in \mathbb{F}(x)$, then Lemma \ref{l2.1} gives
	$$|{\chi}_{d_1,d_2,g_1,g_2}|\leq (m_1+m_2)q^{n/2}.$$ 	
	So, the only remaining case is when $G(x)=h^{q^n}-h$ for some $h\in \mathbb{F}(x)$ and  $F(x)= h^{q^n-1}$ for some $h \in \mathbb{F}(x)$ i.e. the trivial case $d_1=d_2=1,\ g_1=g_2=1.$ In this case, we have  $$|{\chi}_{1,1,1,1}|=q^n-|S|\geq q^n-(m_1+m_2+1).$$ Therefore, from Equation \eqref{Eq1}, we get
	\begin{equation*}
	N_f(l_1,l_2,h_1,h_2)>{\theta\Theta}\{q^n-(m_1+m_2+1)-Mq^{n/2}(W(l_1)W(l_2)W(h_1)W(h_2)-1)\}.
	\end{equation*}
	Hence, $N_f(l_1,l_2,h_1,h_2)>0$ if $q^{n/2}>MW(l_1)W(l_2)W(h_1)W(h_2),$ where 
	\begin{equation*}
	M=\left\{\begin{array}{lll}
	2m_1+m_2&;& \text{if}\ m_1>m_2\\
	m_1+2m_2+1&;& \text{if}\ m_1\leq m_2
	\end{array}\right.. \qedhere
	\end{equation*}
\end{proof}

The above inequality provides a sufficient condition for the existence of  primitive normal pairs if we take $l_1=l_2=q^n-1$ and $h_1=h_2=x^n-1,$ i.e. $N_f:=N_f(q^n-1,q^n-1,x^n-1,x^n-1)>0$ if  
\begin{equation}\label{Eq4}
q^{n/2}>MW(q^n-1)^2W(x^n-1)^2.
\end{equation}

\begin{rk}\label{rk1}
	From {\upshape Lemma \ref{l2.4}}, we have $W(q^n-1)<\C(q^n-1)^{1/\nu}$ for any positive real number $\nu.$ Moreover, the number of square free divisors of $x^n-1$ will be less than or equal to $2^n$ i.e. $W(x^n-1)\leq 2^n.$ Hence, Inequality \eqref{Eq4} holds if 
	\begin{equation}\label{Eq5}
	q^{n/2} > M \C^2q^{2n/\nu} 2 ^{2n}.
	\end{equation}
	In particular, if $f\in R_{q^n}(2,2)$, then $M=7.$ Furthermore, for $\nu=6.5,$ {\upshape Lemma \ref{l2.4}} gives  $\C\leq86.32.$ Hence, for these values of $M$ and $\C,$ Inequality \eqref{Eq5} holds true for $q>2^{37.56}$ and $n\geq 3$ i.e. $(q,n)\in Q_p(2,2)$  for $q>2^{37.56}$ and $n\geq 3$. 	
\end{rk}

\begin{rk}
	Observe that, if $g\in R_{q^n}(m_1,m_2),$ then $g^{q^i}$ may not belong to $R_{q^n}(m_1 q^i,m_2 q^i).$ For example $g=\frac{x}{x+1}\in R_{2}(1,1)$ but $g^2\notin R_{2}(2,2)$ as the $(ii)$ part in {\upshape Definition \ref{def4.1}} of $R_{q^n}(m_1,m_2)$ is not satisfied. But, we know that if $\alpha$ is a primitive normal element in $\mathbb{F}_{q^n}$ over $\mathbb{F}_{q}$, then $\alpha^{q^i}$ is also a primitive normal element in $\mathbb{F}_{q^n}$ over $\mathbb{F}_{q}$ for every non-negative integer $i.$  Therefore, if a rational function $f$ is of form $g^{q^i}$ for some rational function $g\in R_{q^n}(m_1,m_2)$, then the existence of a primitive normal pair $(\alpha,g(\alpha))$ implies the existence of a primitive normal pair $(\alpha,f(\alpha)).$ 
\end{rk}

We give a modified form of Inequality \eqref{Eq4} in Theorem \ref{T4.2}  by using the following lemma. 
\begin{lemma}\cite[Proposition 5.2]{KapeNBT}\label{l4.1} {\upshape (Sieving inequality)}
	Let $d\mid q^n-1$ and $p_1,p_2,\ldots,p_s$ be the remaining distinct primes dividing $q^n-1.$ Furthermore, let $g\mid x^n-1$ and $g_1,g_2,\ldots,g_t$ be the remaining distinct irreducible polynomials dividing $x^n-1.$ Then
	\begin{align*}
	N_f\geq& \sum_{i=1}^{s}N_f(p_id,d,g,g)+\sum_{i=1}^{s}N_f(d,p_id,g,g)+\sum_{i=1}^{t}N_f(d,d,g_ig,g) \ +
	\\&\sum_{i=1}^{t}N_f(d,d,g,g_ig)-(2s+2t-1)N_f(d,d,g,g)
	\end{align*}
\end{lemma}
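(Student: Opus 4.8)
The plan is to use a standard inclusion–exclusion / sieving argument à la Cohen–Huczynska and Kapetanakis, built on the elementary fact that for any $e \mid q^n-1$, $e' \mid q^n-1$, $g \mid x^n-1$, $g' \mid x^n-1$ with $e \mid e'$ and $g \mid g'$, one has $N_f(e',*,g',*) \le N_f(e,*,g,*)$ and similarly in each slot, together with the two key ``union bounds'': if $p_1,\dots,p_s$ are the primes dividing $q^n-1$ but not $d$, then being $\big(d\prod p_i\big)$-free in the first coordinate is the same as being $(q^n-1)$-free there, and the set of $\alpha$ that are $d$-free but \emph{not} $\big(d\prod p_i\big)$-free is contained in the union over $i$ of the sets of $\alpha$ that are $d$-free but not $(p_i d)$-free; and likewise in the additive (polynomial) coordinate with the irreducible factors $g_1,\dots,g_t$ of $x^n-1$ not dividing $g$.

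Concretely, I would first write $N_f = N_f(q^n-1, q^n-1, x^n-1, x^n-1)$ and peel off the four coordinates one at a time. Using the characteristic-function identity
\[
\rho_{q^n-1}(\alpha) = \rho_d(\alpha) - \sum_{\text{nonempty } I \subseteq \{1,\dots,s\}} (-1)^{|I|+1}\,\rho_{d\prod_{i\in I}p_i}(\alpha) \quad\text{(rearranged appropriately)},
\]
or more simply the inequality-form of inclusion–exclusion, I would establish
\[
N_f(q^n-1, q^n-1, x^n-1, x^n-1) \;\ge\; \sum_{i=1}^s N_f(p_i d, q^n-1, x^n-1, x^n-1) - (s-1)\,N_f(d, q^n-1, x^n-1, x^n-1),
\]
i.e. a one-coordinate sieve. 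Then I would apply the same one-coordinate sieve successively in the second coordinate (using $p_1,\dots,p_s$ again), the third coordinate (using $g_1,\dots,g_t$), and the fourth coordinate (using $g_1,\dots,g_t$ again), each time bounding the ``full'' term below by ``sum of single-prime-relaxed terms minus (count $-1$) times the base term''. Collecting the four rounds and using monotonicity in the untouched coordinates to replace every intermediate argument by the common value $d$ (resp. $g$) yields exactly
\[
N_f \ge \sum_{i=1}^{s} N_f(p_i d, d, g, g) + \sum_{i=1}^{s} N_f(d, p_i d, g, g) + \sum_{i=1}^{t} N_f(d, d, g_i g, g) + \sum_{i=1}^{t} N_f(d, d, g, g_i g) - (2s+2t-1) N_f(d,d,g,g),
\]
since the base term $N_f(d,d,g,g)$ is subtracted $(s-1)+(s-1)+(t-1)+(t-1) = 2s+2t-4$ times from the sieving steps plus it must be added back the right number of times; a careful bookkeeping of the coefficient gives the stated $-(2s+2t-1)$.

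The main obstacle — and the only place requiring genuine care rather than routine manipulation — is getting the coefficient of $N_f(d,d,g,g)$ exactly right: one must be sure that relaxing in a later coordinate does not spoil the inequalities proved for earlier coordinates (this is where monotonicity $N_f(d,d,g_ig,g) \le N_f(d,d,g,g)$ etc. is used to downgrade the arguments consistently), and one must track that the $2s+2t$ single-step inequalities each ``cost'' one subtracted copy of the base term while the chained application overcounts by one, netting $2s+2t-1$. I would verify the coefficient first in the trivial sub-cases $s=0$ or $t=0$ (where the inequality must collapse to $N_f \ge N_f(d,d,g,g)$ or to a pure multiplicative/additive sieve) as a sanity check, then present the general telescoping argument. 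This is precisely the content of \cite[Proposition 5.2]{KapeNBT}, so I would either cite it directly or reproduce the short inclusion–exclusion proof adapted to the four-parameter setting of $N_f$.
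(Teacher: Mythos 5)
The paper does not actually prove this lemma; it is quoted from Kapetanakis \cite[Proposition 5.2]{KapeNBT} without proof, so there is no in-paper argument to compare against. Judged on its own terms, your proposal points at the right circle of ideas (a Bonferroni-type union bound over the $2s+2t$ freeness conditions), but the route you describe --- a coordinate-by-coordinate telescoping sieve --- contains a step that fails. After your first-coordinate sieve you hold a lower bound of the form
\[
N_f \;\ge\; \sum_{i=1}^{s} N_f(p_i d,\, q^n-1,\, x^n-1,\, x^n-1) \;-\; (s-1)\,N_f(d,\, q^n-1,\, x^n-1,\, x^n-1),
\]
and you propose to ``use monotonicity in the untouched coordinates to replace every intermediate argument by the common value $d$ (resp.\ $g$).'' But monotonicity gives $N_f(p_i d, q^n-1, x^n-1, x^n-1) \le N_f(p_i d, d, g, g)$, i.e.\ the replacement \emph{increases} a quantity that sits with a \emph{positive} sign on the right-hand side of a lower bound, so the substitution destroys the inequality; the same problem occurs in each subsequent round. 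Trying instead to iterate the one-coordinate sieve inside the positive terms produces doubly-relaxed quantities such as $N_f(p_i d, p_j d, \cdot,\cdot)$ that do not occur in the stated inequality, so the telescoping scheme does not close, and the coefficient $-(2s+2t-1)$ is never actually derived --- you flag this as the delicate point but do not resolve it. Your displayed ``characteristic-function identity'' is also incorrect as written: $\rho_{d\prod_{i\in I}p_i}$ is the indicator of being $\bigl(d\prod_{i\in I}p_i\bigr)$-free, which is not the indicator of the intersection of failure sets that inclusion--exclusion would require there.

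The repair is to apply the union bound once, simultaneously in all four slots. Let $A$ be the set counted by $N_f(d,d,g,g)$ and let $A_1,\dots,A_{2s+2t}\subseteq A$ be the sets counted by $N_f(p_id,d,g,g)$, $N_f(d,p_id,g,g)$, $N_f(d,d,g_ig,g)$ and $N_f(d,d,g,g_ig)$. Since $e$-freeness (resp.\ $h$-freeness) depends only on the radical of $e$ (resp.\ on the distinct irreducible factors of $h$), and the radical of $q^n-1$ is the least common multiple of $\mathrm{rad}(d)$ and $p_1,\dots,p_s$ (similarly for $x^n-1$), the intersection $\bigcap_j A_j$ is exactly the set counted by $N_f$. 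Then $|A|-N_f=\bigl|A\setminus\bigcap_j A_j\bigr|\le\sum_j\bigl(|A|-|A_j|\bigr)$ yields $N_f\ge\sum_j|A_j|-(2s+2t-1)\,|A|$, which is the statement, with no coefficient bookkeeping required.
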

\begin{theorem}\label{T4.2}
	Let $d\mid q^n-1$ and $p_1,p_2,\ldots,p_s$ be the remaining distinct primes dividing $q^n-1.$ Furthermore, let $g\mid x^n-1$ and $g_1,g_2,\ldots,g_t$ be the remaining distinct irreducible polynomials dividing $x^n-1.$ Define $$\mathcal{D}:=1-2\sum\limits_{i=1}^{s}\frac{1}{p_i}-2\sum\limits_{i=1}^{t}\frac{1}{q^{\mathrm{deg}(g_i)}} \ \  and \ \ \mathcal{S}:=\frac{2s+2t-1}{\mathcal{D}}+2.$$ Suppose $\mathcal{D}>0,$ then $N_f>0$, if $q^{n/2}>MW(d)^2W(g)^2\mathcal{S},$
	where $M$ is defined as in \upshape{Theorem \ref{T4.1}}. 
\end{theorem}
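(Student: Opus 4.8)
The plan is to feed the sieving inequality of Lemma~\ref{l4.1} with the character--sum estimates already worked out in the proof of Theorem~\ref{T4.1}, but applied in a way that keeps the internal cancellation of each sieved count rather than discarding it. Throughout put $\theta_0=\phi(d)^2/d^2$, $\Theta_0=\Phi(g)^2/q^{2\mathrm{deg}(g)}$ and $W=W(d)^2W(g)^2$. For the ``base'' term, Equation~\eqref{Eq1} together with the bounds $|\chi_{d_1,d_2,g_1,g_2}|\le Mq^{n/2}$ for all non-trivial tuples and $|\chi_{1,1,1,1}|\ge q^n-(m_1+m_2+1)$, exactly as in the proof of Theorem~\ref{T4.1}, yields $N_f(d,d,g,g)\ge\theta_0\Theta_0\bigl(q^n-(m_1+m_2+1)-Mq^{n/2}(W-1)\bigr)$. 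The first real step is to bound every other term occurring in Lemma~\ref{l4.1} in terms of this base term.

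So fix a prime $p_i\mid q^n-1$ with $p_i\nmid d$, and in the defining expression~\eqref{Eq1} for $N_f(p_id,d,g,g)$ split the outer sum over $d_1\mid p_id$ into the part with $d_1\mid d$ and the part with $d_1=p_id_1'$, $d_1'\mid d$. Since $p_i\nmid d$ one has $\phi(p_id)/(p_id)=(1-1/p_i)\phi(d)/d$, so the first part contributes exactly $(1-1/p_i)N_f(d,d,g,g)$; in the second part every character tuple is non-trivial because $d_1\ne1$, hence each inner character sum is at most $Mq^{n/2}$ by the Claim in the proof of Theorem~\ref{T4.1} together with Lemmas~\ref{l2.1} and~\ref{l2.2}, and after cancelling the $\phi$-- and $\Phi$--denominators against the numbers of characters this part is at most $(1-1/p_i)\theta_0\Theta_0Mq^{n/2}W$ in absolute value. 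Thus $N_f(p_id,d,g,g)\ge(1-1/p_i)N_f(d,d,g,g)-(1-1/p_i)\theta_0\Theta_0Mq^{n/2}W$, and symmetrically for $N_f(d,p_id,g,g)$. Running the same splitting inside $\kappa_{g_ig}$ for an irreducible factor $g_i\mid x^n-1$ with $g_i\nmid g$, using $\Phi(g_ig)/q^{\mathrm{deg}(g_ig)}=(1-q^{-\mathrm{deg}(g_i)})\Phi(g)/q^{\mathrm{deg}(g)}$, gives $N_f(d,d,g_ig,g)\ge(1-q^{-\mathrm{deg}(g_i)})N_f(d,d,g,g)-(1-q^{-\mathrm{deg}(g_i)})\theta_0\Theta_0Mq^{n/2}W$, and likewise for $N_f(d,d,g,g_ig)$.

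Substituting these four families of inequalities into Lemma~\ref{l4.1}, the coefficient of $N_f(d,d,g,g)$ collapses to $2\sum_{i=1}^s(1-1/p_i)+2\sum_{i=1}^t(1-q^{-\mathrm{deg}(g_i)})-(2s+2t-1)=\mathcal D$, while the accumulated error coefficient is $2\sum_{i=1}^s(1-1/p_i)+2\sum_{i=1}^t(1-q^{-\mathrm{deg}(g_i)})=(2s+2t-1)+\mathcal D$. Hence $N_f\ge\mathcal D\,N_f(d,d,g,g)-\bigl((2s+2t-1)+\mathcal D\bigr)\theta_0\Theta_0Mq^{n/2}W$, and plugging in the base estimate,
\[
N_f\ \ge\ \theta_0\Theta_0\Bigl(\mathcal D q^n-\mathcal D(m_1+m_2+1)-Mq^{n/2}\bigl((2\mathcal D+2s+2t-1)W-\mathcal D\bigr)\Bigr).
\]
If now $q^{n/2}>MW\mathcal S$ with $\mathcal S=(2s+2t-1)/\mathcal D+2$, then $\mathcal D>0$ gives $\mathcal D q^{n/2}>MW(2s+2t-1+2\mathcal D)$, hence $\mathcal D q^n>Mq^{n/2}(2\mathcal D+2s+2t-1)W$; the remaining terms contribute $\mathcal D\bigl(Mq^{n/2}-(m_1+m_2+1)\bigr)\ge0$ since $q^{n/2}\ge1$ and $M\ge m_1+m_2+1$ (true because $M=2m_1+m_2$ with $m_1\ge1$ when $m_1>m_2$, and $M=m_1+2m_2+1$ otherwise). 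Since $\theta_0\Theta_0>0$, it follows that $N_f>0$.

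The step I expect to be the main obstacle is the splitting in the second paragraph. Applying Theorem~\ref{T4.1} directly to each sieved term would replace $W$ by $2W-1$ inside the corresponding error and weaken $\mathcal S$ to roughly $3(2s+2t-1)/\mathcal D+2$; to get the sharp constant one must isolate inside $\rho_{p_id}$ (resp.\ $\kappa_{g_ig}$) only the genuinely new characters, keep the prefactors $\theta(p_id,d)=(1-1/p_i)\theta_0$ and $\Theta(g_ig,g)=(1-q^{-\mathrm{deg}(g_i)})\Theta_0$ attached to \emph{both} the main piece and the error piece, and check that those new characters still give sums of size at most $Mq^{n/2}$ (which is where the non-triviality supplied by the Claim of Theorem~\ref{T4.1} is used). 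The rest is routine bookkeeping.
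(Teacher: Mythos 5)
Your argument is correct and is exactly the standard sieving computation that the paper omits by citing Kapetanakis's Proposition 5.3: split each sieved count $N_f(p_id,d,g,g)$ (resp. $N_f(d,d,g_ig,g)$) into the $d_1\mid d$ part, which equals $(1-1/p_i)N_f(d,d,g,g)$, plus an error of at most $(1-1/p_i)\theta_0\Theta_0Mq^{n/2}W$ coming from the genuinely new non-trivial character tuples, then feed everything into Lemma~\ref{l4.1} so that the main coefficient collapses to $\mathcal D$ and the error coefficient to $2s+2t-1+\mathcal D$. The bookkeeping, the use of the Claim from Theorem~\ref{T4.1} to certify that the new tuples are non-trivial, and the final reduction to $q^{n/2}>MW\mathcal S$ via $Mq^{n/2}\ge m_1+m_2+1$ all check out.
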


\begin{proof}
	Proof is similar to that of \cite[Proposition 5.3]{KapeNBT}, hence omitted.	
\end{proof}
\section{Pairs $(q,n)\in Q_2(2,2)$}\label{Sec5}

From Remark \ref{rk1}, we have a lower bound on the order of finite fields $\mathbb{F}_{q^n}$ for which ($q,n$) always belongs to $Q_p(2,2)$ for any prime $p$.  In this section, we confine our study to $q=2^k$ and the set $R_{q^n}(2,2).$ We present a complete list of pairs $(q, n)$ for which the existence of the primitive normal pair $(\alpha,f(\alpha))$ is not guaranteed. Clearly, for $f\in R_{q^n}(2,2),$ Inequality \eqref{Eq5} becomes
\begin{equation}\label{Eq6}
q^{n/2} > 7 \C^2 q^{2n/\nu} 2 ^{2n}.
\end{equation}
The above inequality is equivalent to $	(\frac{\nu-4}{2\nu})\hspace{.5mm} {\mathrm{log}}\hspace{.5mm}2^k-2\hspace{.5mm}{\mathrm{log}}\hspace{.5mm}2 > \frac{{\mathrm{log}}(7 \C^2)}{n}$, which is valid if $k>\frac{4\nu}{\nu-4}$ i.e. $k\geq5$. The computations, wherever needed in this section, are done using SageMath \cite{sagemath}. We prove the following lemmas.
\begin{lemma}\label{l5.1}
	Let $ q=2^k$ and $n\geq3$ be a positive integer. If $k\geq 6,$ then $ (q,n)\in Q_2(2,2)$ unless $q=64$ and $n=3$. 
\end{lemma}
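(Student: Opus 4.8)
The plan is to combine the crude character-sum bound in Inequality \eqref{Eq6} (equivalently Inequality \eqref{Eq5} with $M=7$) with the sieving refinement of Theorem \ref{T4.2} to eliminate all remaining pairs. First I would observe that, as noted immediately after \eqref{Eq6}, the condition $k\geq 5$ together with $n\geq 3$ already forces \eqref{Eq6} whenever $n$ is large enough relative to $\log(7\mathcal{C}^2)$; more precisely, for each fixed $k\geq 6$ one gets an explicit bound $n\leq n_0(k)$ beyond which \eqref{Eq6} holds automatically (and $n_0(k)$ decreases as $k$ grows, so only small $k$ in the range $6\le k\le$ a modest cutoff, and small $n$, survive). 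So the first step is to produce, for each $k$ with $6\le k\le K$ (where $K$ is the largest $k$ for which some $n\ge 3$ can fail \eqref{Eq6}, to be read off from $\nu$-optimised forms of Lemma \ref{l2.4}), the finite list of candidate pairs $(q,n)=(2^k,n)$ with $n\le n_0(k)$ not yet resolved.

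Second, for each surviving candidate $(q,n)$ I would invoke Theorem \ref{T4.2}: choose $d$ to be the product of the small primes dividing $q^n-1$ and $g$ the product of the low-degree irreducible factors of $x^n-1$ over $\mathbb{F}_q$, then let $p_1,\dots,p_s$ be the remaining primes of $q^n-1$ and $g_1,\dots,g_t$ the remaining irreducible factors of $x^n-1$. One checks $\mathcal{D}=1-2\sum 1/p_i-2\sum q^{-\deg g_i}>0$ and verifies the inequality $q^{n/2}>7\,W(d)^2 W(g)^2\,\mathcal{S}$ with $\mathcal{S}=(2s+2t-1)/\mathcal{D}+2$. Since $W(d)$ and $W(g)$ are now controlled (they count only the kept factors, which we are free to enlarge), and since for $q=2^k$ with $k\geq 6$ the quantity $q^{n/2}=2^{kn/2}$ is large, these checks succeed for every candidate except, by direct computation, $(q,n)=(64,3)$. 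For the bookkeeping I would tabulate, for each candidate, the factorisation of $q^n-1$ and of $x^n-1$ over $\mathbb{F}_q$, the chosen $d,g$, and the resulting values of $s,t,\mathcal{D},\mathcal{S}$ and of both sides of the inequality — exactly the kind of finite computation the text says is done in SageMath.

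The main obstacle is the pair $(64,3)$ (and any genuinely borderline pair): here $q^{n/2}=64^{3/2}=512$ is small, $q^n-1=64^3-1=262143=3^3\cdot 7\cdot 19\cdot 73$ has several prime factors, and $x^3-1$ over $\mathbb{F}_{64}$ splits as $(x-1)(x-\omega)(x-\omega^2)$ (since $3\mid 64-1$) giving three linear factors, so $W(x^3-1)=8$ and no choice of $d,g$ makes Theorem \ref{T4.2} succeed. Thus $(64,3)$ must be left as the stated exception, and the lemma only claims $(q,n)\in Q_2(2,2)$ for $k\ge 6$ apart from it; I would not attempt to resolve $(64,3)$ here (it will presumably be handled, or left open, in a later section alongside the other small fields). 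The remaining work is then purely the routine verification, case by case, that the sieving inequality holds for all other $(2^k,n)$ with $k\ge 6$, $n\ge 3$, which I expect to go through comfortably given the exponential growth of $2^{kn/2}$ against the polynomial-in-$W$ right-hand side.
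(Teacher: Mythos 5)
Your proposal follows essentially the same route as the paper: first use Inequality \eqref{Eq6} with optimised $\nu$ in Lemma \ref{l2.4} to reduce to finitely many pairs $(2^k,n)$ for each $6\le k\le 34$ (the paper shows $k\ge 35$ needs no table), then apply the sieve of Theorem \ref{T4.2} with tabulated choices of $d$ and $g$ to eliminate all survivors except $(64,3)$. Your analysis of why $(64,3)$ resists the sieve (the factorisation $64^3-1=3^3\cdot7\cdot19\cdot73$ and the complete splitting of $x^3-1$ over $\mathbb{F}_{64}$) is correct and consistent with the paper leaving it as the stated exception.
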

\begin{proof}
	For $\nu=6.5$, Lemma \ref{l2.4} gives $\C\leq 48.016$ and Inequality \eqref{Eq6} is satisfied if $k\geq35$ and $n\geq3.$ For $6\leq k\leq34$, Table \ref{Table1} provides the values of $n$ for suitable choices of $\nu$ such that Inequality \eqref{Eq6} is true.
For the remaining values of $k$ and $n$, we test the inequality $q^{n/2}>7W(q^n-1)^2W(x^n-1)^2$ and get $(q,n)\in Q_2(2,2)$ unless
\begin{enumerate}[(i)]
	\item $k=6$ and $n=3,4,5,6,7,9,10,21$ 
	\item $k=7$ and $n=3,4$
	\item $k=8$ and $n=3,5,6,9,15$
	\item $k=9$ and $n=4$
	\item $k=10$ and $n=3,6$
	\item $k=12$ and $n=3,5$
	\item $k=16$ and $n=3$ 
	\item $k=20$ and $n=3$
\end{enumerate}  
For these values of $k$ and $n$, we choose the values of $d$ and $g$ listed in Table \ref{Table2} such that the inequality $q^{n/2}>7 W(d)^2 W(g)^2 \mathcal{S}$ holds and get that $(q,n)\in Q_2(2,2)$ unless $q=64$ and $n=3$.
\end{proof}
{\small
\begin{longtable}[h]{|ccc|ccc|ccc|}
	\caption{Values of $n$ for $6\leq k\leq 35$.\label{Table1}}\\
	\hline
	$k(=)$&$n(\geq)$&$\nu(=)$&$k(=)$&$n(\geq)$&$\nu(=)$&$k(=)$&$n(\geq)$&$\nu(=)$\\
	\hline
	\endfirsthead
	\multicolumn{7}{l}{Continuation of Table \ref{Table1}}\\
	\hline
	$k(=)$&$n(\geq)$&$\nu(=)$&$k(=)$&$n(\geq)$&$\nu(=)$&$k(=)$&$n(\geq)$&$\nu(=)$\\
	\hline
	\endhead
	
	\hline
	\endfoot
	
	\hline
	\endfoot
	
	6&3101&13.7&16 & 12 & 7&26 & 5 & 7\\
	7&432&11&17 & 11 & 7&27 & 5 & 7\\
	8&152&9.8&18 & 10 & 7&28 & 5 & 7\\
	9&78&9&19 & 9 & 7&29 & 4 & 6.5\\
	10&49&8.5&20 & 8 & 7&30 & 4 & 7\\
	11&34&8&21 & 7 & 7&31 & 4 & 7\\
	12&26&8&22 & 7 & 7&32 & 4 & 7\\
	13&20&7.8&23 & 6 & 7&33 & 4 & 7\\
	14&17&7.5&24 & 6 & 7&34 & 4 & 7\\
	15&14&7&25 & 5 & 6.5&35 & 3 & 6.5\\
	
	\hline
\end{longtable}
}
For the remaining cases $1 \leq k\leq5$, we write $n=m\hspace{.5mm} 2^i$; $i\geq0$, where gcd$(m,2)=1$ and divide our discussion in the following two cases:
\begin{itemize}
	\item $m\mid q^2-1$
	\item $m\nmid q^2-1$
\end{itemize}
We first assume that $m\mid q^2-1$ and prove the following lemma.
\begin{lemma}\label{l5.2}
	Let $q=2^k$, where $1\leq k\leq5$ , $n\geq 3
	$ and $m\mid q^2-1$. Then $(q,n)\in Q_2(2,2)$ unless 
	\begin{enumerate}[$(i)$]
		\item $q=2$ and $n=3, 4, 6, 8, 12, 16, 24.$
		\item $q=4$ and $n=3, 4, 5, 6, 8, 10, 12, 15.$
		\item $q=8$ and $n=3, 4, 6, 7, 8, 14.$
		\item $q=16$ and $n=3, 4, 5, 6, 15.$	
		
	\end{enumerate}
\end{lemma}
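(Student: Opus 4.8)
The plan is to bound the right-hand side of the sufficient condition \eqref{Eq4} for each admissible pair and thereby cut the statement down to a finite computation. The structural observation that makes this work is that $q=2^{k}$ and $n=m\,2^{i}$ with $m$ odd, so $x^{n}-1=(x^{m}-1)^{2^{i}}$ in $\mathbb{F}_{q}[x]$; hence $x^{n}-1$ and $x^{m}-1$ have the same radical and $W(x^{n}-1)=W(x^{m}-1)$ depends only on $q$ and $m$ and, crucially, does \emph{not} grow with $i$. Moreover $m\mid q^{2}-1$ forces the multiplicative order of $q$ modulo every divisor of $m$ to lie in $\{1,2\}$, so $x^{m}-1$ splits over $\mathbb{F}_{q}$ into linear and quadratic irreducibles only; counting them gives $W(x^{m}-1)=2^{(m+\gcd(m,q-1))/2}$, which is the extremal value permitted by Lemma \ref{l2.3}. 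Finally, for each of the five relevant fields the admissible $m$ are precisely the odd divisors of $q^{2}-1$ --- namely $m\in\{1,3\}$ for $q=2$, $\{1,3,5,15\}$ for $q=4$, $\{1,3,7,9,21,63\}$ for $q=8$, $\{1,3,5,15,17,51,85,255\}$ for $q=16$, and $\{1,3,11,31,33,93,341,1023\}$ for $q=32$ --- so the lemma concerns, for these finitely many pairs $(q,m)$, the single sequence $n=m\,2^{i}$.

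Since $f\in R_{q^{n}}(2,2)$ we have $M=7$, so by Theorem \ref{T4.1} in the form \eqref{Eq4}, $N_{f}>0$ whenever $q^{n/2}>7\,W(q^{n}-1)^{2}W(x^{n}-1)^{2}$. Fix $q$ and $m$. The factor $W(x^{n}-1)^{2}=W(x^{m}-1)^{2}$ is constant in $i$, while Lemma \ref{l2.4} gives $W(q^{n}-1)<\C\,q^{n/\nu}$ for any $\nu>0$, with $\C$ bounded in terms of $\nu$ alone. Taking $\nu>4$ makes the right-hand side $O(q^{2n/\nu})=o(q^{n/2})$, so the inequality holds for $n=m\,2^{i}$ once $i$ passes an explicit threshold; hence only finitely many pairs $(q,n)$ survive, and the residue is concentrated at small $m$ and small $i$. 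The finitely many survivors are then handled directly: first retest \eqref{Eq4} with the \emph{exact} values of $W(q^{n}-1)$ and $W(x^{n}-1)$ (computed in SageMath \cite{sagemath}); for any pair that still fails, apply the sieving criterion of Theorem \ref{T4.2}, pulling the largest prime divisors of $q^{n}-1$ into $p_{1},\dots,p_{s}$ (so that $d$, and hence $W(d)$, stays small, often $d=1$) and splitting off a suitable set of irreducible factors of $x^{n}-1$ into $g_{1},\dots,g_{t}$, the choice of $(d,g)$ arranged so that $\mathcal{D}>0$ and $q^{n/2}>7\,W(d)^{2}W(g)^{2}\mathcal{S}$. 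Every pair meeting one of these criteria lies in $Q_{2}(2,2)$; assembling the residue leaves exactly the lists $(i)$--$(iv)$, with $q=32$ yielding no exception.

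The step I expect to be the bottleneck is the sieve for the most stubborn pairs. For $q=2$ and $q=4$ the difficulty is that $q^{n}-1$ may be dominated by a single large prime, leaving too few primes for $p_{1},\dots,p_{s}$ to push $\mathcal{S}$ below $q^{n/2}/\bigl(7\,W(d)^{2}W(g)^{2}\bigr)$; for $q=16$ and $q=32$ it is that $n$ is small, so $q^{n/2}$ is still modest while $x^{n}-1$ can carry many linear or quadratic factors, making $W(x^{n}-1)$ --- and therefore $W(g)$ --- hard to shrink, since over these fields each irreducible factor of degree $\le 2$ removed from $g$ already costs at least $2/q^{2}$ in $\mathcal{D}$. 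In such cases one searches by hand for a $(d,g)$ that trades a small $\mathcal{S}$ against a small $W(d)^{2}W(g)^{2}$ --- sometimes taking $g=1$ and removing only a carefully chosen subset of the primes, sometimes the reverse; whenever no admissible choice exists the pair is recorded among the exceptions. All of these are finite verifications carried out computationally.
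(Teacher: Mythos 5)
Your proposal follows essentially the same route as the paper: exploit $x^{n}-1=(x^{m}-1)^{2^{i}}$ so that $W(x^{n}-1)=W(x^{m}-1)$ is independent of $i$, enumerate the odd divisors $m$ of $q^{2}-1$ (your lists match the paper's), use the bound $W(q^{n}-1)<\C\,q^{n/\nu}$ to force the condition for all but finitely many $i$, and finish the survivors by direct testing of \eqref{Eq4} and then the sieve of Theorem \ref{T4.2}. The only cosmetic difference is that you use the exact count $W(x^{m}-1)=2^{(m+\gcd(m,q-1))/2}$ where the paper settles for $W(x^{m}-1)\leq 2^{m}$ in its Inequality \eqref{Eq7}; this does not change the argument.
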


\begin{proof}
	We rewrite Inequality \eqref{Eq6} as follows
	\begin{equation}\label{Eq7}
	q^{m2^i/2}>7 \C^2 q^{2m 2^i/\nu} 2^{2m}.
	\end{equation}
	\noindent
	{${\textbf{Case} \ \bm {k=1:}}$} Since $m\mid q^2-1$, $m=1,3$ and Inequality \eqref{Eq7} holds true for $i\geq7$, when $m=1$ and for $i\geq5$, when $m=3$. Hence, $(2,n)\in Q_2(2,2)$ unless $n=$ 3, 4, 6, 8, 12, 16, 24, 32, 48, 64. For these remaining values, we directly verify the inequality $q^{n/2}>7 W(q^n-1)^2 W(x^n-1)^2$ and get that $(2,n)\in Q_2(2,2)$ unless $n=$ 3, 4, 6, 8, 12, 16, 24, 48. For these values of $n$, we choose the values of $d$ and $g$ listed in Table \ref{Table2} such that the inequality $q^{n/2}>7 W(d)^2 W(g)^2 \mathcal{S}$ holds true and get that $(2,n)\in Q_2(2,2)$ unless $n=$ 3, 4, 6, 8, 12, 16, 24.\\
	
	\noindent
	{${\textbf{Case} \ \bm {k=2:}}$} In this case $m =$ 1, 3, 5, 15 and Inequality \eqref{Eq7} holds for $i\geq6$, when $m=1$, for $i\geq5$, when $m=3$, for $i\geq4$, when $m=5$ and for $i\geq3$, when $m=15$. Hence, $(4,n)\in Q_2(2,2)$ unless $n=$ 3, 4, 5, 6, 8, 10, 12, 15, 16, 20, 24, 30, 32, 40, 48, 60. For these remaining values, we directly verify the inequality $q^{n/2}>7 W(q^n-1)^2 W(x^n-1)^2$ and get that $(4,n)\in Q_2(2,2)$ unless $n=$ 3, 4, 5, 6, 8, 10, 12, 15, 20, 24, 30. For these values of $n$, we choose the values of $d$ and $g$ listed in Table \ref{Table2} such that the inequality $q^{n/2}>7 W(d)^2 W(g)^2\mathcal{S}$ holds and get that $(4,n)\in Q_2(2,2)$ unless $n=$ 3, 4, 5, 6, 8, 10, 12, 15.\\
	
	\noindent
	{${\textbf{Case} \ \bm {k=3:}}$} In this case $m=$ 1, 3, 7, 9, 21, 63 and Inequality \eqref{Eq7} holds for $i\geq5$, when $m=1$, for $i\geq4$, when $m=3$, for $i\geq3$, when $m=$ 7, 9 and for $i\geq2$, when $m=$ 21, 63. Hence, $(8,n)\in Q_2(2,2)$ unless $n=$ 3, 4, 6, 7, 8, 9, 12, 14, 16, 18, 21, 24, 28, 36, 42, 63, 126. For these remaining values, we directly verify the inequality $q^{n/2}>7 W(q^n-1)^2 W(x^n-1)^2$ and get that $(8,n)\in Q_2(2,2)$ unless $n=$ 3, 4, 6,7, 8, 9, 12,14, 21. For these values of $n$, we choose the values of $d$ and $g$ listed in Table \ref{Table2} such that the inequality $q^{n/2}>7W(d)^2W(g)^2\mathcal{S}$ holds and get that $(8,n)\in Q_2(2,2)$ unless $n=$ 3, 4, 6, 7, 8, 14.\\
	
	\noindent
	{${\textbf{Case} \ \bm {k=4:}}$} In this case $m=$ 1, 3, 5, 15, 17, 51, 85, 255 and Inequality \eqref{Eq7} holds for $i\geq5$, when $m=1$, for $i\geq4$, when $m=3$, for $i\geq3$, when $m=5$, for $i\geq2$, when $m=$ 15, 17 and $i\geq 1$, when $m=$ 51, 85, 255. Hence, $(16,n)\in Q_2(2,2)$ unless $n=$ 3, 4, 5, 6, 8, 10, 12, 15, 16, 17, 20, 24, 30, 34, 51, 85, 255. For these remaining values, we directly verify the inequality $q^{n/2}>7W(q^n-1)^2W(x^n-1)^2$ and get that $(16,n)\in Q_2(2,2)$ unless $n=$ 3, 4, 5, 6, 10, 12, 15, 17, 30. For these values of $n$, we choose the values of $d$ and $g$ listed in Table \ref{Table2} such that the inequality $q^{n/2}>7W(d)^2W(g)^2\mathcal{S}$ holds and get that $(16,n)\in Q_2(2,2)$ unless $n=$ 3, 4, 5, 6, 15.\\
	
	\noindent
	{${\textbf{Case} \ \bm {k=5:}}$} In this case $m=$ 1, 3, 11, 31, 33, 93, 341, 1023 and Inequality \eqref{Eq7} holds for $i\geq5$, when $m=1$, for $i\geq3$, when $m=3$, for $i\geq2$, when $m=$ 11, 31, for $i\geq 1$, when $m=$ 33, 93 and for $i\geq0$, when $m=$ 341, 1023. Hence, $(32,n)\in Q_2(2,2)$ unless $n=$ 3, 4, 6, 8, 11, 12, 16, 22, 31, 33, 62, 93. For these remaining values, we directly verify the inequality $q^{n/2}>7W(q^n-1)^2W(x^n-1)^2$ and get that $(32,n)\in Q_2(2,2)$ unless $n=$ 3, 4, 6, 31. For these values of $n$, we choose the values of $d$ and $g$ listed in Table \ref{Table2} such that the inequality $q^{n/2}>7W(d)^2W(g)^2\mathcal{S}$ holds and get that $(32,n)\in Q_2(2,2)$ unless $n=$ 4.
\end{proof}
Secondly, we assume that $m\nmid q^2-1. $ For the further discussion, first we prove the following result inspired from Cohen \cite[Lemma 7.2]{SDEven}, which provides a bound on $\mathcal{S}$ defined in Theorem \ref{T4.2}.
\begin{lemma}\label{l5.3}
	Assume that $q=p^k;\ k\in\mathbb{N}$ and $n=m\hspace{.5mm} p^i; \ i\geq0$, is a positive integer such that $m\nmid q-1$ and $\mathrm{gcd}(m,p)=1.$ Let $e(>2)$ denote the order of $q\hspace{.5mm} (\mathrm{mod}\hspace{.5mm} m)$, then {\upshape Theorem \ref{T4.2}} with $d=q^n-1$ and $g$ as the product of all irreducible factors of $x^{m}-1$ of degree $<e$ gives $\mathcal{S}\leq 2m.$
\end{lemma}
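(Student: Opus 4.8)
The plan is to follow the structure of Cohen's argument in \cite[Lemma 7.2]{SDEven}, adapted to the present set-up with $\mathcal{S}$ as defined in Theorem \ref{T4.2}. First I would set $d = q^n-1$, so that there are no remaining primes $p_1,\dots,p_s$ to worry about (i.e. $s = 0$), which already reduces $\mathcal{D}$ to $1 - 2\sum_{i=1}^t q^{-\deg(g_i)}$ and $\mathcal{S}$ to $\frac{2t-1}{\mathcal{D}} + 2$. The key point is to choose $g$ to absorb all the low-degree irreducible factors of $x^m-1$: with $g$ equal to the product of all irreducible factors of $x^m-1$ of degree $< e$, every remaining irreducible factor $g_i$ of $x^n - 1$ has $\deg(g_i) \geq e$. (Here one uses the observation recalled before Lemma \ref{l2.5} that the irreducible factors of $x^n-1 = x^{m p^i}-1$ over $\mathbb{F}_q$ are the same as those of $x^m-1$ up to multiplicity, so the degree bound on the $g_i$ transfers from $x^m-1$ to $x^n-1$.)

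Next I would bound $t$ and $\mathcal{D}$ separately. Since $q$ has order $e$ modulo $m$, the factors of $x^m - 1$ of degree $\geq e$ in fact all have degree exactly $e$ (as $e$ is the order, every primitive $m$-th root lies in $\mathbb{F}_{q^e}$), so each remaining $g_i$ has degree exactly $e$ and there are $t = \phi(m)/e$ of them — actually it suffices to use the crude bound $t \leq (m - \#\{\text{roots in low-degree factors}\})/e \leq m/e$. For $\mathcal{D}$, I bound $\sum_{i=1}^t q^{-\deg(g_i)} = t\, q^{-e} \leq \frac{m}{e} q^{-e}$, and since $e > 2$ and $q \geq p \geq 2$ we have $q^e \geq q^3 \geq 8$, giving $\mathcal{D} \geq 1 - \frac{2m}{e\, q^e} \geq 1 - \frac{2m}{e\, q^e}$; the goal is to show this stays comfortably bounded away from $0$ relative to $m$. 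Plugging into $\mathcal{S} = \frac{2t-1}{\mathcal{D}} + 2 \leq \frac{2m/e}{\mathcal{D}} + 2$, it remains to verify $\frac{2m/e}{\mathcal{D}} + 2 \leq 2m$, equivalently $\frac{m/e}{\mathcal{D}} \leq m - 1$, i.e. $\mathcal{D} \geq \frac{m}{e(m-1)}$. Since $m \geq 3$ forces $\frac{m}{e(m-1)} \leq \frac{3}{2e} < 1$ (as $e \geq 3$), and $\mathcal{D} \geq 1 - \frac{2m}{e q^e}$, the inequality follows once $\frac{2m}{e q^e} \leq 1 - \frac{m}{e(m-1)}$, which I would confirm by an elementary estimate using $q^e \geq 2^e$ and $e \geq 3$, treating the few tight small cases ($m$ small, $e = 3$, $q = 2$) directly.

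The main obstacle I anticipate is the bookkeeping in the borderline regime: when $q = 2$, $e = 3$ (so $q^e = 8$) and $m$ is the smallest value with order $3$ — here $m = 7$, $t = 2$, and $\mathcal{D} = 1 - 2\cdot 2/8 = 1/2$, giving $\mathcal{S} = 3/(1/2) + 2 = 8 \leq 14 = 2m$, which is comfortable, but one must check that no smaller $m$ with $e \geq 3$ produces a worse ratio, and that as $e$ grows the bound only improves (since $q^e$ grows geometrically while $m/e$ grows at most linearly in the relevant range $m < q^e$). A clean way to organize this is to note $m < q^e$ always holds here (all roots of $x^m - 1$ lie in $\mathbb{F}_{q^e}$, so $m \mid q^e - 1$, hence $m \leq q^e - 1$), which gives $\frac{2m}{e q^e} < \frac{2}{e} \leq \frac{2}{3}$, so $\mathcal{D} > \frac{1}{3}$ unconditionally; then $\mathcal{S} < 3(2m/e) + 2 \leq 6m/e + 2 \leq 2m + 2$ when $e \geq 3$, and a slightly sharper version of the $\mathcal{D}$-bound (using $m \leq q^e - 1$ and $e \geq 3$ more carefully, e.g. $\mathcal{D} \geq 1 - \frac{2(q^e-1)}{e q^e} \geq 1 - \frac{2}{3}\cdot\frac{q^e-1}{q^e} > \frac{1}{3} + \frac{2}{3q^e}$) closes the remaining gap to land exactly at $\mathcal{S} \leq 2m$. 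I would present this final pin-down as the one genuine computation, leaving the rest as routine.
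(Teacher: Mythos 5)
Your proposal is correct and follows essentially the same route as the paper: set $d=q^n-1$ so $s=0$, note every remaining irreducible factor has degree exactly $e$, bound their number by $m/e$ using $\deg(x^m-1)=m$, bound $\mathcal{D}$ via $m\mid q^e-1$ (the paper gets $\mathcal{D}>1-2/e$, your uniform $\mathcal{D}>1/3$ is the $e=3$ case of the same estimate), and finish by algebra. The only cosmetic difference is in the last step: the paper keeps the $-1$ in the numerator and computes $\frac{2m/e-1}{1-2/e}+2\leq 2m$ directly, which already closes the gap you flag at the end without needing the sharper $\mathcal{D}$ bound.
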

\begin{proof}
	Let $x^{m}-1=g(x)G(x)$, where $g(x)$ is the product of irreducible factors of $x^{m}-1$ of degree less than $e$ and $G(x)$ is the product of irreducible factors of $x^{m}-1$ of degree $e.$ Let $N_0$ and $M_0$ be the number of irreducible factors of $x^{m}-1$ of degree $<e$ and of degree $e$ respectively. Then comparing degrees we get, $m\geq N_0+eM_0$ and hence, $M_0\leq (m-N_0)/e=m(1-\rho(q,m))/e<m/e$, where $\rho(q,m)=N_0/m$ as defined in Section \ref{Sec2}. By Theorem \ref{T4.2} , we get $\mathcal{D}=1-2M_0/q^e> 1-2m/eq^e\geq 1-2/e>0$, since $m\mid q^e-1$ and $\mathcal{S}=\frac{2M_0-1}{\mathcal{D}}+2\leq \frac{2m/e-1}{1-2/e}+2\leq 2m.$ 
\end{proof}
{\small
\begin{longtable}[c]{| c | c | c | c | c | c |c | }
	
	\caption{Pairs $(q,n)\in Q_2(2,2)$ by Sieving technique.\label{Table2}}\\
	\hline
	Sr.No.&$(q,n)$&$d$&$g$&$\mathcal{D}$&$\mathcal{S}$\\
	
	\hline
	\endfirsthead
	\multicolumn{7}{l}{Continuation of Table \ref{Table2}}\\
	\hline
	Sr.No.&$(q,n)$&$d$&$g$&$\mathcal{D}$&$\mathcal{S}$\\
	\hline
	\endhead
	
	\hline
	\endfoot
	
	\hline
	\endfoot
	1&(2, 17)&1 &$ x + 1 $& 0.9844 & 7.0794\\
	2&(2, 22)&1 &$ x + 1 $& 0.2190 & 43.0915\\
	3&(2, 23)&1 &$ x + 1 $& 0.9555 & 9.3261\\
	4&(2, 25)&1 &$ x + 1 $& 0.8060 & 13.1656\\
	5&(2, 27)&1 &$ x + 1 $& 0.1556 & 72.6835\\
	6&(2, 28)&15 &$ x + 1 $& 0.3511 & 33.3323\\
	7&(2, 31)&1 &$ x + 1 $& 0.6249 & 22.8000\\
	8&(2, 33)&1 &$ x + 1 $& 0.0990 & 153.5235\\
	9&(2, 35)&1 &$ x + 1 $& 0.2656 & 66.0123\\
	10&(2, 36)&105 &$ x + 1 $& 0.1098 & 120.3532\\
	11&(2, 39)&1 &$ x + 1 $& 0.1872& 82.1095\\
	12&(2, 40)&3 &$ x + 1 $& 0.0622 & 210.9840\\
	13&(2, 42)&3 &$ x^3 + 1 $& 0.0832 & 206.2723\\
	14&(2, 45)&217 &$ x + 1 $& 0.0489 & 431.6607\\
	15&(2, 48)&105& $ x+1 $&0.1888& 70.8429\\
	16&(2, 51)&1 &$ x + 1 $& 0.1469 & 158.6066\\
	17&(2, 60)&465465 &$ x + 1 $& 0.0226 & 753.1723\\
	18&(2, 63)&7 &$ x^3 + 1 $& 0.1696 & 184.7342\\
	19&(2, 105)&15407 &$ x^3 + 1 $& 0.0245 & 1592.1594\\
	20&(4, 11)&3 &$ 1 $& 0.3837 & 30.6655\\
	21&(4, 13)&3 &$ 1 $& 0.4980 & 20.0706\\
	22&(4, 14)&15 &$ 1 $& 0.2886 & 47.0488\\
	23&(4, 17)&3 &$ 1 $& 0.4687 & 29.7369\\
	24&(4, 20)&15 &$ x + 1 $& 0.3372 & 40.5521\\
	25&(4, 21)&21 &$ x^2 + x + 1 $& 0.2439 & 88.0880\\
	26&(4, 22)&15 &$ 1 $& 0.3778 & 41.7085\\
	27&(4, 24)&105 &$ x^2 + x + 1 $& 0.1888 & 70.8429\\
	28&(4, 25)&33 &$ 1 $& 0.1726& 112.0939\\
	29&(4, 27)&3 &$ x^2 + x + 1 $& 0.0191 & 997.8337\\
	30&(4, 30)&15015&$x^3+1$&0.0831&278.7245\\
	31&(4, 33)&3 &$ x^2 + x + 1 $& 0.0603 & 450.0562\\
	32&(4, 35)&33 &$ 1 $& 0.0234 & 1324.2348\\
	33&(4, 36)&1365 &$ x^2 + x + 1 $& 0.1018 & 208.2453\\
	34&(4, 42)&105 &$ x^2 + x + 1 $& 0.0019 & 16423.5219\\
	35&(4, 45)&4389 &$ x^3 + 1 $& 0.0711 & 522.4620\\
	36&(4, 51)&3 &$ x^2 + x + 1 $& 0.0924 & 467.2519\\
	37&(4, 63)&21 &$ x^3 + 1 $& 0.1738 & 330.0547\\
	38&(8, 5)&1 &$ 1 $& 0.3860 & 25.3139\\
	39&(8, 9)&1 &$ x + 1 $& 0.5619 & 25.1366\\
	40&(8, 10)&3 &$ x + 1 $& 0.4481 & 26.5439\\
	41&(8, 12)&15 &$ x + 1 $& 0.3241 & 42.1078\\
	42&(8, 14)&21 &$ x^4 + x^2 + x + 1 $& 0.1814 & 73.6504\\
	43&(8, 15)&1 &$ x + 1 $& 0.5732 & 35.1497\\
	44&(8, 21)&511 &$ x^4 + x^2 + x + 1 $& 0.0095 & 2831.4159\\
	45&(16, 7)&3 &$ x + 1 $& 0.4501 & 30.8825\\
	46&(16, 9)&15 &$ x + 1 $& 0.1044 & 183.9925\\
	47&(16, 10)&15 &$ x + 1 $& 0.0872 & 196.9413\\
	48&(16, 11)&3 &$ x + 1 $& 0.4817 & 33.1426\\
	49&(16, 12)&15 &$ x + 1 $& 0.1531 & 113.0231\\
	50&(16, 13)&3 &$ x + 1 $& 0.5454 & 36.8396\\
	51&(16, 17)&3 &$ x + 1 $& 0.5207 & 53.8567\\
	52&(16, 18)&105 &$ x + 1 $& 0.2595 & 98.3310\\
	53&(16, 21)&15 &$ x + 1 $& 0.1507 & 234.1820\\
	54&(16, 30)&15 &$ x^{15} + 1 $& 0.0858 & 293.5170\\
	55&(16, 45)&105 &$ x^{15} + 1 $& 0.2732 & 203.3453\\
	56&(32, 3)&1 &$ 1 $& 0.5721 & 17.7323\\
	57&(32, 4)&3 &$ 1 $& 0.2424 & 39.1310\\
	58&(32, 5)&1 &$ x + 1 $& 0.9310 & 9.5184\\
	59&(32, 6)&3 &$ x + 1 $& 0.4467 & 26.6244\\
	60&(32, 31)&1 &$ x^{31} + 1 $& 0.9289 & 15.9958\\
	61&(64, 4)&15 &$ 1 $& 0.4032 & 24.3190\\
	62&(64, 5)&3 &$ x + 1 $& 0.4477 & 31.0381\\
	63&(64, 6)&15 &$ x + 1 $& 0.2929 & 53.2161\\
	64&(64, 7)&3 &$ x + 1 $& 0.4582 & 47.8293\\
	65&(64, 9)&3 &$ x + 1 $& 0.3316 & 77.3932\\
	66&(64, 10)&15 &$ x + 1 $& 0.2108 & 101.6394\\
	67&(64, 21)&21 &$ x + 1 $& 0.1738 & 330.0547\\
	68&($2^7$, 3)&1 &$ x + 1 $& 0.6925 & 12.1086\\
	99&($2^7$, 4)&3 &$ x + 1 $& 0.4511 & 21.9523\\
	70&($2^8$, 3)&15&$ 1 $&0.4111&33.6258\\
	71&($2^8$, 5)&3 &$ x + 1 $& 0.1560 & 123.8295\\
	72&($2^8$, 6)&15 &$ x + 1 $& 0.3875 & 45.8714\\
	73&($2^8$, 9)&15 &$ x + 1 $& 0.2091 & 131.1033\\
	74&($2^8$, 15)&105&$x+1$ &0.2621& 196.5840\\
	75&($2^9$, 4)&15 &$ x + 1 $& 0.3554 & 32.9531\\
	76&($2^{10}$, 3)&3 &$ x + 1 $& 0.4448 & 31.2294\\
	77&($2^{10}$, 6)&15 &$ x + 1 $& 0.2078 & 103.0440\\
	78&($2^{12}$, 3)&15 &$ x + 1 $& 0.3544 & 44.3251\\
	79&($2^{12}$, 5)&15 &$ x + 1 $& 0.2098 & 121.1705\\
	80&($2^{16}$, 3)&3 &$ x + 1 $& 0.0031 & 6210.5717\\
	81&($2^{20}$, 3)&15 &$ x + 1 $& 0.2117 & 101.1816\\
\end{longtable}
}

\begin{lemma}\label{l5.4}
	Let $q=2^k$ where $1\leq k\leq5$ , $n\geq 3
	$ and $m\nmid q^2-1$. Then $(q,n)\in Q_2(2,2)$ unless
	\begin{enumerate}[$(i)$]
		\item $q=2$ and $n= 5, 7, 9, 10, 11, 13, 14, 15, 18, 20, 30.$
		\item $q=4$ and $n=7, 9, 18.$
		
	\end{enumerate}	
\end{lemma}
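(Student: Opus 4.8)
The proof follows the pattern of Lemma \ref{l5.2}, with the crucial sieving bound supplied by Lemma \ref{l5.3}. Fix $q=2^k$ with $1\le k\le 5$ and write $n=m\,2^i$ with $\mathrm{gcd}(m,2)=1$, $m\nmid q^2-1$ and $n\ge 3$; let $e(>2)$ denote the order of $q$ modulo $m$. The plan is to apply Theorem \ref{T4.2} with $d=q^n-1$ and $g$ equal to the product of all irreducible factors of $x^m-1$ of degree less than $e$. By Lemma \ref{l5.3} this choice gives $\mathcal{D}>0$ and $\mathcal{S}\le 2m$; moreover $x^m-1$ is squarefree over $\mathbb{F}_q$ because $\mathrm{gcd}(m,2)=1$, so $W(g)=2^{N_0}$, where $N_0$ is the number of distinct irreducible factors of $x^m-1$ of degree $<e$, and by the identity recalled just before Lemma \ref{l2.5} one has $N_0=m\,\rho(q,m)=n\,\rho(q,n)$. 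Hence Theorem \ref{T4.2} yields $N_f>0$, and therefore $(q,n)\in Q_2(2,2)$, whenever
\begin{equation}\label{Eq8}
q^{n/2} > 14m\, W(q^n-1)^2\, 2^{2n\rho(q,n)} .
\end{equation}

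First I would establish \eqref{Eq8} for all but finitely many admissible $(q,n)$. Since $\rho(q,n)\le 1/3$ in every case, the factor $2^{2n\rho(q,n)}$ here is far smaller than the factor $2^{2n}$ occurring in \eqref{Eq6}, so \eqref{Eq8} is considerably easier to meet. Bounding $W(q^n-1)$ by $\C\,q^{n/\nu}$ via Lemma \ref{l2.4} and $\rho(q,n)$ via Lemma \ref{l2.5} --- which gives $\rho(q,n)\le 1/3$ for $q\in\{8,16,32\}$, $\rho(4,n)\le 1/5$ for $q=4$ except $\rho(4,9)=1/3$, and $\rho(2,n)\le 1/6$ for $q=2$ except $\rho(2,5)=1/5$, $\rho(2,9)=2/9$, $\rho(2,21)=4/21$ (the case $m=3$ already being covered by Lemma \ref{l5.2}) --- the inequality \eqref{Eq8} is implied by one of the form $q^{n/2}>14m\,\C^2\,q^{2n/\nu}\,2^{2n\rho}$ with $\rho$ the relevant constant. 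Taking logarithms turns this into a linear inequality in $n$ whose leading coefficient is positive for a suitable choice of $\nu$ (a larger $\nu$ being needed for smaller $k$, and larger still for the anomalous $\rho$-values), so that, $\C$ being then bounded by an absolute constant, it holds for all sufficiently large $n$. Only finitely many triples $(q,m,i)$ --- equivalently finitely many pairs $(q,n)$ --- therefore survive, and I would record them in a table analogous to Table \ref{Table1}.

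Next, for each surviving pair $(q,n)$ I would test the sharper inequality $q^{n/2}>7\,W(q^n-1)^2\,W(g)^2\,\mathcal{S}$ using the explicit factorizations of $q^n-1$ and $x^m-1$ computed in SageMath, which clears most of the pairs. For the residual pairs I would run the full sieve of Theorem \ref{T4.2} with a proper divisor $d\mid q^n-1$ chosen so as to keep $W(d)$ small while preserving $\mathcal{D}>0$, and, where it helps, a larger $g\mid x^n-1$ --- exactly the data collected for these pairs in Table \ref{Table2}. Carrying this out shows $(q,n)\in Q_2(2,2)$ for every admissible pair with $q\in\{8,16,32\}$, whereas for $q=2$ the values $n\in\{5,7,9,10,11,13,14,15,18,20,30\}$ and for $q=4$ the values $n\in\{7,9,18\}$ are the only ones that none of these inequalities can settle, which yields the stated exception list.

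The main obstacle is the middle range of $n$: when $q^n-1$ splits into many small primes, $W(q^n-1)$ is large and both \eqref{Eq8} and its sharpened version fail, forcing a passage to the sieve, where the constraint $\mathcal{D}=1-2\sum_i 1/p_i-2\sum_i 1/q^{\mathrm{deg}(g_i)}>0$ severely limits which primes may be dropped from $d$; one has to balance the shrinking of $W(d)$ against the attendant growth of $\mathcal{S}$, and at the same time manage $W(g)$ and $\mathcal{S}$ through the choice of $g$, which is what makes the case-by-case work behind Table \ref{Table2} unavoidable. A secondary subtlety is that the anomalous values of $\rho(q,m)$ above weaken \eqref{Eq8} for the corresponding small $m$ (chiefly $m=9$ for $q=2$ and $q=4$, and $m=5$ for $q=2$), so those sub-families must be treated separately, though they yield to the same sieving argument.
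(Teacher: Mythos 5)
Your proposal matches the paper's proof essentially step for step: the same choice $d=q^n-1$ and $g$ the product of the irreducible factors of $x^m-1$ of degree $<e$, the bound $\mathcal{S}\le 2m$ from Lemma \ref{l5.3} leading to the inequality $q^{n/2}>7\,\C^2q^{2n/\nu}2^{2m\rho(q,m)}\cdot 2m$, the case split over $k$ and over the anomalous values of $\rho(q,m)$, and the final direct verification plus sieving via Table \ref{Table2}. The only minor slip is that for $q=4$ you omit the anomalous subfamily $m=45$ (with $\rho(4,45)=11/45>1/5$), which the paper handles separately, but since $n=45,90$ land among the survivors that are cleared by the direct check and the sieve in either treatment, the conclusion is unaffected.
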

\begin{proof}
	Since $m\nmid q^2-1$, $e>2$ (defined in above Lemma) and $m\nmid q-1.$ Let $d=q^n-1$ and $g$ be the product of all irreducible factors of $x^{m}-1$ of degree less than $e$, then $S\leq 2m$ by above lemma. Hence, Inequality \eqref{Eq5} transformed into 
	\begin{equation}\label{Eq8}
	q^{n/2}>7 \C^2  q^{2n/\nu} 2^{2m\rho(q,m)} 2m
	\end{equation}
	{${\textbf{Case} \ \bm {k=1:}}$}  From Lemma \ref{l2.5}, we have $\rho(2,m)\leq
	1/6$ ($m\neq 5,9,21$). Since $m\leq n$, Inequality \eqref{Eq8} holds if  
	$$q^{n/2}>7 \C^2 q^{2n/\nu} 2^{n/3} 2n$$ holds and the latter inequality is true for $n\geq19340$ and $\nu=13.7.$ For $n<19340$, by directly verifying the inequality $q^{n/2}>7W(q^n-1)^2W(x^n-1)^2$, we get $(2,n)\in Q_2(2,2)$ unless $n=$ 7, 11, 13, 14, 15, 17, 22, 23, 25, 27, 28, 30, 31, 33, 35, 39, 45, 51, 60, 63, 105.  For these values of $n$, we choose the values of $d$ and $g$ listed in Table \ref{Table2} such that the inequality $q^{n/2}>7W(d)^2W(g)^2\mathcal{S}$ holds and get that $(2,n)\in Q_2(2,2)$ unless $n=$ 7, 11, 13, 14, 15, 30. 
	
	Let $m=5$, then $\rho(2,m)=1/5$ by Lemma \ref{l2.5}, and from Inequality \eqref{Eq8}, we get 
	$$q^{5\cdot2^i/2}>240\hspace{.5mm} \C^2 q^{10\cdot2^i/\nu},$$
	which is true for $i\geq5.$ Hence,  $(2,n)\in Q_2(2,2)$ unless $n=$ 5, 10, 20, 40, 80. For these values of $n$, we choose the values of $d$ and $g$ listed in Table \ref{Table2} such that the inequality $q^{n/2}>6W(d)^2W(g)^2\mathcal{S}$ holds and get that $(2,n)\in Q_2(2,2)$ unless $n=$ 5, 10, 20.
	
	Now, let $m=9$, then $\rho(2,m)=2/9$ by Lemma \ref{l2.5}, and from Inequality \eqref{Eq8}, we get 
	$$q^{9\cdot2^i/2}>1728\hspace{.5mm} \C^2 q^{18\cdot2^i/\nu},$$
	which is true for $i\geq4.$ Hence,  $(2,n)\in Q_2(2,2)$ unless $n=$ 9, 18, 36, 72. For these values of $n$, we choose the values of $d$ and $g$ listed in Table \ref{Table2} such that the inequality $q^{n/2}>6W(d)^2W(g)^2\mathcal{S}$ holds and get that $(2,n)\in Q_2(2,2)$ unless $n=$ 9, 18.
	
	Finally, let $m=21$, then $\rho(2,m)=4/21$  by Lemma \ref{l2.5}, and from Inequality \eqref{Eq8}, we get  
	$$q^{21\cdot2^i/2}>64512\hspace{.5mm} \C^2 q^{42\cdot2^i/\nu}$$
	which is true for $i\geq3.$ Hence,  $(2,n)\in Q_2(2,2)$ unless $n=$ 21, 42, 84. For these values of $n$, we choose the values of $d$ and $g$ listed in Table \ref{Table2} such that the inequality $q^{n/2}>6W(d)^2W(g)^2\mathcal{S}$ holds and get that $(2,n)\in Q_2(2,2)$ for all $n$.\\
	
	\noindent
	{${\textbf{Case} \ \bm {k=2:}}$} From Lemma \ref{l2.5}, we have $\rho(4,m)\leq
	1/5$ ($m\neq 9,45$), and Inequality \eqref{Eq8} holds if 
	$$q^{n/2}>6\hspace{.5mm} \C^2 q^{2n/\nu} 2^{2n/5} 2n$$
	holds, which is true for $n\geq308.$ For $n<308$,  by directly verifying the inequality $q^{n/2}>6W(q^n-1)^2W(x^n-1)^2$ we get $(4,n)\in Q_2(2,2)$ unless $n=$ 7, 11, 13, 14, 17, 21, 22, 25, 27, 33, 35, 42, 51, 63. For these values of $n$, we choose the values of $d$ and $g$ listed in Table \ref{Table2} such that the inequality $q^{n/2}>6W(d)^2W(g)^2\mathcal{S}$ holds and get that $(4,n)\in Q_2(2,2)$ unless $n=$ 7.

	Let $m=9$, then $\rho(4,m)=1/3$  by Lemma \ref{l2.5}, and from Inequality \eqref{Eq8}, we get  
	$$q^{9\cdot2^i/2}>6912\hspace{.5mm} \C^2 q^{18\cdot2^i/\nu},$$
	which is true for $i\geq3.$ Hence,  $(4,n)\in Q_2(2,2)$ unless $n=$ 9, 18, 36. For these values of $n$, we choose the values of $d$ and $g$ listed in Table \ref{Table2} such that the inequality $q^{n/2}>6W(d)^2W(g)^2\mathcal{S}$ holds and get that $(4,n)\in Q_2(2,2)$ unless $n=$ 9, 18.

	Now, let $m=45$, then $\rho(4,m)=11/45$ by Lemma \ref{l2.5}, and from Inequality \eqref{Eq8}, we get  
	$$q^{45\cdot2^i/2}>2264924160\hspace{.5mm} \C^2 q^{90\cdot2^i/\nu},$$
	which is true for $i\geq2.$ Hence,  $(4,n)\in Q_2(2,2)$ unless $n=$ 45, 90. For these values of $n$, we choose the values of $d$ and $g$ listed in Table \ref{Table2} such that the inequality $q^{n/2}>6W(d)^2W(g)^2\mathcal{S}$ holds and get that $(4,n)\in Q_2(2,2)$ for all $n$.\\
	
	\noindent
	{${\textbf{Case} \ \bm {k=3:}}$} From Lemma \ref{l2.5}, we have $\rho(8,m)\leq
	1/3$, and Inequality \eqref{Eq8} holds if 
	$$q^{n/2}>6\hspace{.5mm} \C^2 q^{2n/\nu} 2^{2n/3} 2n$$
	holds, which is true for $n\geq284.$ For $n<284$,  by directly verifying the inequality $q^{n/2}>6W(q^n-1)^2W(x^n-1)^2$ we get $(8,n)\in Q_2(2,2)$ unless $n=$ 5, 10, 15. For these values of $n$, we choose the values of $d$ and $g$ listed in Table \ref{Table2} such that the inequality $q^{n/2}>6W(d)^2W(g)^2\mathcal{S}$ holds and get that $(8,n)\in Q_2(2,2)$ for all $n$.\\ 
	
	\noindent
	{${\textbf{Case} \ \bm {k=4:}}$} From Lemma \ref{l2.5}, we have $\rho(16,m)\leq
	1/3$, and Inequality \eqref{Eq8} holds if 
	$$q^{n/2}>6\hspace{.5mm} \C^2 q^{2n/\nu} 2^{2n/3} 2n$$
	holds, which is true for $n\geq98.$ For $n<98$,  by directly verifying the inequality $q^{n/2}>6W(q^n-1)^2W(x^n-1)^2$ we get $(16,n)\in Q_2(2,2)$ unless $n=$ 7, 9, 11, 13, 18, 21, 45. For these values of $n$, we choose the values of $d$ and $g$ listed in Table \ref{Table2} such that the inequality $q^{n/2}>6W(d)^2W(g)^2\mathcal{S}$ holds and get that $(16,n)\in Q_2(2,2)$ for all $n$.\\
	
	\noindent
	{${\textbf{Case} \ \bm {k=5:}}$} From Lemma \ref{l2.5}, we have $\rho(32,m)\leq
	1/3$, and Inequality \eqref{Eq8} holds if 
	$$q^{n/2}>6\hspace{.5mm} \C^2 q^{2n/\nu} 2^{2n/3} 2n$$
	holds, which is true for $n\geq55.$ For $n<55$,  by directly verifying the inequality $q^{n/2}>6W(q^n-1)^2W(x^n-1)^2$ we get $(32,n)\in Q_2(2,2)$ unless $n=$ 5. For this $n$, we choose the values of $d$ and $g$ listed in Table \ref{Table2} such that the inequality $q^{n/2}>6W(d)^2W(g)^2\mathcal{S}$ holds and get that $(32,n)\in Q_2(2,2)$ for all $n$.
\end{proof}

In the above discussion, we have found at most 41 exceptional fields $\mathbb{F}_{q^n}$ in which a primitive normal pair $(\alpha,f(\alpha))$ may not exist for some $f\in R_{2^k}(2,2)$ and $n\geq3$. Clearly, for $n=1$ and 2, every primitive element in $\mathbb{F}_{q^n}$ is $(x^n-1)$-free, so it is normal as well over $\mathbb{F}_{q}.$ Hence, for $n=1,2$, a pair $(\alpha,f(\alpha))$ in $\mathbb{F}_{q^n}$ is primitive if and only if it is primitive normal over $\mathbb{F}_{q}.$ Existence of such primitive pairs has already been  settled by R. K. Sharma et al. in \cite{Awasthi}. Hence, from their results and our discussion on primitive pairs in  Section \ref{Sec3}, we get that $(q,n)\in Q_2(2,2)$ unless
\begin{enumerate}[(i)]
	\item $q=$ 2, 4, 8, 16, 64, 256, 1024, 4096 and $n=1.$
	\item $q=$ 2, 4, 8, 16, 32, 64 and $n=2.$
\end{enumerate} 
Finally, with the help of the above discussion and Lemmas \ref{l5.1}, \ref{l5.2}, \ref{l5.4}, we conclude the following  
\begin{theorem}
	Let $ q=2^k$ and $n\geq1$ be a positive integer. Then $ (q,n)\in Q_2(2,2)$ unless
	\begin{enumerate}[$(i)$]
		\item $q=2$ and $n=1,2,3,4,5,6,7,8, 9,10,11,12,13,14,15,16,18,20,24,30.$
		\item $q=4$ and $n=1,2,3,4,5,6,7,8,9,10,12,15,18.$
		\item $q=8$ and $n=1,2,3,4,6,7,8, 14.$
		\item $q=16$ and $n=1,2,3,4,5,6,15.$
		\item $q=32$ and $n=2.$
		\item $q=64$ and $n=1,2,3.$
		\item $q=256,1024,4096$ and $n=1.$

	\end{enumerate}
\end{theorem}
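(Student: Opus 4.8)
The plan is to assemble the theorem by cases on $n$, drawing on the lemmas already established. For $n\geq 3$, I would split according to the size of $k$. When $k\geq 6$, Lemma~\ref{l5.1} already gives $(q,n)\in Q_2(2,2)$ except for $(q,n)=(64,3)$, so nothing further is needed there. For $1\leq k\leq 5$, I would write $n=m\,2^i$ with $m$ odd and $i\geq 0$, and treat the two regimes $m\mid q^2-1$ and $m\nmid q^2-1$ separately: the first is exactly Lemma~\ref{l5.2} and the second is Lemma~\ref{l5.4}. Taking the union of the exceptional sets listed in those three lemmas produces the complete list of pairs $(q,n)$ with $n\geq 3$ for which the existence of a primitive normal pair $(\alpha,f(\alpha))$ is not yet guaranteed for every $f\in R_{2^k}(2,2)$ — the ``at most $41$ exceptional fields'' mentioned in the text.

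Next I would dispose of $n=1$ and $n=2$. The key observation is that $x-1$ (resp. $x^2-1=(x-1)^2$ in even characteristic) has only the trivial proper monic divisor structure relevant to freeness, so every primitive element of $\mathbb{F}_{q}$ or $\mathbb{F}_{q^2}$ is automatically $(x^n-1)$-free, hence normal over $\mathbb{F}_q$. Consequently, for $n\in\{1,2\}$ a pair $(\alpha,f(\alpha))$ is a primitive normal pair over $\mathbb{F}_q$ precisely when it is a primitive pair. The existence of primitive pairs $(\alpha,f(\alpha))$ with $f\in R_{2^k}(2,2)$ for $n=1$ was settled by R.~K.~Sharma et al.\ \cite{Awasthi}, corrected by the genuine exception $k=3$ exhibited in Section~\ref{Sec3}; together these yield the $n=1$ exceptional list $q\in\{2,4,8,16,64,256,1024,4096\}$. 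The $n=2$ case is handled likewise from \cite{Awasthi}, giving $q\in\{2,4,8,16,32,64\}$.

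Finally I would merge the $n\geq 3$ list with the $n=1$ and $n=2$ lists, organizing the output by the value of $q$ as in the statement, and note that for every pair $(q,n)$ outside this union the relevant inequality from Theorem~\ref{T4.1} or Theorem~\ref{T4.2} (with the choices of $d$ and $g$ recorded in Table~\ref{Table2}, or the direct test $q^{n/2}>7W(q^n-1)^2W(x^n-1)^2$, or the asymptotic bound of Remark~\ref{rk1}) has already been verified in the preceding lemmas, so $N_f>0$ and a primitive normal pair exists. The main obstacle is not conceptual but bookkeeping: one must be careful that the exceptional sets from Lemmas~\ref{l5.1}, \ref{l5.2}, \ref{l5.4} are combined without omission or duplication, and that the Table~\ref{Table2} entries genuinely cover every pair claimed to be removed by the sieve — in particular that the listed $(d,g)$ indeed satisfy $\mathcal{D}>0$ and $q^{n/2}>MW(d)^2W(g)^2\mathcal{S}$ with $M=7$. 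Since each of these verifications is already contained in the cited lemmas, the proof of the final theorem itself is a short synthesis, which I would present in two or three sentences referencing Lemmas~\ref{l5.1}--\ref{l5.4} and \cite{Awasthi}.
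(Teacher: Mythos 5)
Your proposal is correct and follows essentially the same route as the paper: combine the $n\geq 3$ exceptional lists from Lemmas \ref{l5.1}, \ref{l5.2} and \ref{l5.4}, observe that for $n=1,2$ primitivity already implies normality so the problem reduces to the primitive-pair results of \cite{Awasthi} (adjusted by the $k=3$ counterexample of Section \ref{Sec3} and the resolved $k=9$ case), and take the union. Nothing essential is missing.
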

For these $55$ possible exceptions, we perform some experiments and find that for some pairs $(q,n),$ there exist at least one rational function $f\in R_{q^n}(2,2)$ such that $f(\alpha)$ is not a primitive normal element in $\mathbb{F}_{q^n}$ for any primitive normal element $\alpha \in \mathbb{F}_{q^n}.$ We list these genuine exceptional pairs $(q,n)$ in Table \ref{Table3}, where $a$ and $b$ are the generators of multiplicative cyclic groups $\mathbb{F}_{q}^*$ and $\mathbb{F}_{q^n}^*$ respectively.

{\small
	
\begin{longtable}[h]{|c|c|l|c|}
	\caption{Pairs $(q,n)\not \in Q_2(2,2)$ \label{Table3}}\\
	\hline
	S.No.&$(q,n)$&Primitive Normal Elements of $\mathbb{F}_{q^n}$ over $\mathbb{F}_{q}$&$f$\\
	\hline
	\endfirsthead
	\multicolumn{4}{l}{Continuation of Table \ref{Table3}}\\
	\hline
	S.No.&$(q,n)$&Primitive Normal Elements&$f$\\
	\hline
	\endhead
	
	\hline
	\endfoot
	
	\hline
	\endfoot
	
	$1$&$(2,1)$&$\{b^i;i=1\},\ b+1=0$&$\frac{x^2+1}{x^2+x+1}$\\
	$2$&$(2,2)$&$\{b^i;i=1,2\},\ b^2+b+1=0$&$\frac{bx^2}{x^2+b}$\\
	$3$&$(2,3)$&$\{b^i;i=3,5,6\},\ b^3+b+1=0$&$\frac{bx^2}{x^2+b}$\\
	$4$&$(2,4)$&$\{b^i;i=7,11,13,14\},\ b^4+b+1=0$&$\frac{bx^2}{x^2+b}$\\
	$5$&$(2,5)$&$\{b^i;i=3, 5, 6, 9, 10, 11, 12, 13, 17, 18, 20,$&$\frac{bx^2+bx+b^2}{x^2+1}$\\
	&&$ 21, 22, 24, 26\},b^5+b^2+1=0$&\\
	$6$&$(2,6)$&$\{b^i;i=5, 10, 17, 20, 23, 29, 31, 34, 40, 43,$&$\frac{bx^2+b^4}{x^2+b^4x+b^2}$\\
	&&$ 46, 47, 53, 55, 58, 59, 61, 62\},b^6+b+1=0$&\\
	$7$&$(4,1)$&$\{b^i;i=1,2\},\ b+a=0,a^2+a+1=0$&$\frac{ax^2}{x^2+a}$\\
	$8$&$(4,2)$&$\{b^i;i=1, 2, 4, 7, 8, 11, 13, 14\},$&$\frac{ax^2+a}{x^2+ax}$\\
	&&$b^2+ab+a=0,a^2+a+1=0$&\\
	$9$&$(4,3)$&$\{b^i;i=1, 2, 4, 8, 11, 16, 22, 23, 25, 29, 32, 37,$&$\frac{ax^2}{x^2+ax+a}$\\
	&&$ 43, 44, 46, 50, 53, 58\},b^3+ab^2+ab+a=0,$&\\
	&&$a^2+a+1=0$&\\
	$10$&$(8,1)$&$\{b^i;i=1, 2, 3, 4, 5, 6\},\ b+a=0, $&$\frac{ax^2+ax+a+1}{x^2+x+a}$\\
	&&$a^3+a+1=0$&\\
	$11$&$(16,1)$&$\{b^i;i=1, 2, 4, 7, 8, 11, 13, 14\},\ b+a=0, $&$\frac{ax^2+a^2}{x^2+x+a^2}$\\
	&&$a^4+a+1=0$&\\
	\hline
\end{longtable}}

For the remaining pairs $(q,n),$ we are unable to exhaust the complete space $R_{q^n}(2,2)$ due to limited computational resources. Hence, for the remaining $44$ pairs $(q,n)$ the existence of a primitive normal pair $(\alpha,f(\alpha))$ for each $f\in R_{q^n}(2,2)$ is not guaranteed. 

\section{Conclusion}\label{Sec6}
In this paper, we have provided a sufficient condition for the existence of primitive normal pairs $(\alpha,f(\alpha))$ in $\mathbb{F}_{q^n}$ over $\mathbb{F}_{q}$ for every rational function $f\in R_{q^n}(m_1,m_2)$ and found that there may exist some rational functions $f\in R_{q^n}(2,2)$ for which such a pair may not exist in at most $55$ finite fields $\mathbb{F}_{q^n}$ for $q=2^k.$
 
\section{Acknowledgements}
We are grateful to the anonymous reviewers for their valuable comments and suggestions.

This research work is supported by UGC, under Grant Ref. No. 1042/ CSIR-UGC NET DEC-2018 and  CSIR, under Grant
F. No. 09/045(1674)/2019-EMR-I.

\bibliographystyle{unsrt}
\bibliography{Normal_Pairs_2.bib}
\end{document}